\DeclareFontFamily{U}{rsfs}{%
\skewchar\font127}
\DeclareFontShape{U}{rsfs}{m}{n}{%
<-6>rsfs5<6-8.5>rsfs7<8.5->rsfs10}{}
\DeclareSymbolFont{rsfs}{U}{rsfs}{m}{n}
\DeclareRobustCommand*\rsfs{%
\@fontswitch\relax\mathrsfs}
\theoremstyle{plain}
\newtheorem{thm}{Theorem}[section]
\newtheorem*{thm*}{Theorem}
\newtheorem{prop}[thm]{Proposition}
\newtheorem{lem}[thm]{Lemma}
\newtheorem{defi}[thm]{Definition}
\newtheorem{rmk}[thm]{Remark}
\newtheorem*{cor*}{Corollary}
\newtheorem{prop-defi}[thm]{Proposition-Definition}
\newtheorem{thm-defi}[thm]{Theorem-Definition}
\newtheorem{lem-defi}[thm]{Lemma-Definition}
\newtheorem*{question*}{Question}
\newtheorem{setup-def}[thm]{Setup-Definition}
\newdimen\argwidth
\def\db[#1\db]{
 \setbox0=\hbox{$#1$}\argwidth=\wd0
 \setbox0=\hbox{$\left[\box0\right]$}
  \advance\argwidth by -\wd0
 \left[\kern.3\argwidth\box0 \kern.3\argwidth\right]}
\newcommand{\cC}{\mathcal{C}}
\newcommand{\oO}{\mathcal{O}}
\newcommand{\pP}{\mathcal{P}}
\newcommand{\sS}{\mathcal{S}}
\newcommand{\uU}{\mathcal{U}}
\newcommand{\xX}{\mathcal{X}}
\newcommand{\Ob}{\mathcal{O}b}
\newcommand{\bL}{\mathbb{L}}
\newcommand{\bQ}{\mathbb{Q}}
\newcommand{\bR}{\mathbb{R}}
\renewcommand{\tilde}{\widetilde}
\renewcommand{\hat}{\widehat}
\newcommand{\dspec}{\mathop{{\textbf{Spec}}}\nolimits}
\newcommand{\lr}{\longrightarrow}
\newcommand{\ch}{\mathop{\rm ch}\nolimits}
\newcommand{\rk}{\mathop{\rm rk}\nolimits}
\newcommand{\td}{\mathop{\rm td}\nolimits}
\newcommand{\Spec}{\mathop{\rm Spec}\nolimits}
\newcommand{\Coh}{\mathop{\rm Coh}\nolimits}
\newcommand{\im}{\mathop{\rm im}\nolimits}
\newcommand{\bC}{\mathbb{C}}
\newcommand{\bZ}{\mathbb{Z}}
\newcommand{\bT}{\mathbb{T}}
\newcommand{\bI}{\mathbb{I}}
\newcommand{\bF}{\mathbb{F}}
\def\vir{\mathrm{\vir}}
\def\loc{\mathrm{\loc}}
\def\undxX{\underline \xX}
\def\Cinf{\cC^\infty}
\def\dd{\mathrm{d}}
\def\bT{\mathbb{T}}
\def\virt{^{\mathrm{vir}}}
\def\beq{\begin{equation}}
\def\eeq{\end{equation}}
\def\loc{{\mathrm{loc}}}
\def\@tocline#1#2#3#4#5#6#7{\relax
  \ifnum #1>\c@tocdepth 
  \else
    \par \addpenalty\@secpenalty\addvspace{#2}%
    \begingroup \hyphenpenalty\@M
    \@ifempty{#4}{%
      \@tempdima\csname r@tocindent\number#1\endcsname\relax
    }{%
      \@tempdima#4\relax
    }%
    \parindent\z@ \leftskip#3\relax \advance\leftskip\@tempdima\relax
    \rightskip\@pnumwidth plus4em \parfillskip-\@pnumwidth
    #5\leavevmode\hskip-\@tempdima
      \ifcase #1
       \or\or \hskip 1em \or \hskip 2em \else \hskip 3em \fi%
      #6\nobreak\relax
    \hfill\hbox to\@pnumwidth{\@tocpagenum{#7}}\par
    \nobreak
    \endgroup
  \fi}
\title[Cosection Localized Virtual Classes, Revisited]{Cosection Localization for D-Manifolds and $(-2)$-Shifted Symplectic Derived Schemes, Revisited}
\author{Michail Savvas}
\address{Department of Mathematics, The University of Texas at Austin, Austin, TX 78712, USA}
\email{msavvas@utexas.edu}
\begin{document}

\maketitle

\begin{abstract}
This is a continuation of prior work of the author on cosection localization for d-manifolds. We construct reduced virtual fundamental classes for derived manifolds with surjective cosections and cosection localized virtual fundamental classes for $(-2)$-shifted symplectic derived schemes in larger generality. Moreover, using recent results of Oh--Thomas, we show that the algebraic and differential geometric constructions of reduced and cosection localized virtual fundamental classes of $(-2)$-shifted symplectic derived schemes yield the same result in homology. We obtain applications towards the construction and integrality of reduced invariants in Donaldson--Thomas theory of Calabi--Yau fourfolds.
\end{abstract}

\setcounter{tocdepth}{1}
\tableofcontents
\setcounter{tocdepth}{2}

\section{Introduction}

Coherent sheaves and vector bundles are objects of fundamental importance in algebraic geometry. From an enumerative perspective, the construction and study of invariants counting sheaves on algebraic varieties satisfying given constraints plays a foundational role with a prominent case of interest in theoretical physics and string theory being sheaf-theoretic enumerative invariants of Calabi--Yau manifolds, commonly referred to as Donaldson or Donaldson--Thomas invariants.

Let $W$ be a smooth, projective complex Calabi-Yau manifold and $M$ a proper moduli scheme parameterizing stable sheaves on $W$. While Donaldson--Thomas invariants were defined in the '90s \cite{Thomas} when $\dim W =3$ by integrating cohomology classes on $M$ against a virtual fundamental cycle $[M]\virt$ \cite{BehFan,LiTian} in the homology or Chow theory of $M$, the fourfold case required a different strategy to produce a virtual cycle which took a while to develop. 

In \cite{CaoLeungDT4}, the authors introduced a gauge-theoretic approach which yields invariants in certain cases. Subsequently, Borisov--Joyce \cite{BorisovJoyce} used the existence of a $(-2)$-shifted derived enhancement of $M$ \cite{PTVV} together with an orientation \cite{CaoGrossJoyceOrient} to carry out an intricate truncation procedure that produces a compact, oriented d-manifold in the sense of \cite{JoyceDMan}, which admits a well-defined virtual fundamental class $[M]\virt \in H_{\ast} (M,\bZ)$. This gave access to the desired virtual fundamental class, not of algebraic, but rather differential geometric flavor.

The algebraic counterpart $[M]\virt \in A_\ast(M, \bZ[1/2])$ was constructed by Oh--Thomas \cite{OhThomasI} using the derived enhancement of $M$ as well as a localized version of the square root Euler class of Edidin--Graham \cite{EdidinGrahamQuadric}. In \cite{OhThomasII}, it was further proved that the differential geometric and algebraic virtual class are compatible with each other.
\medskip

Donaldson--Thomas theory of Calabi--Yau fourfolds (DT4 theory) is an area of intense ongoing research activity in recent years (cf. \cite{BaeKoolParkI, CaoKoolDT4PT4, CaoKoolMonavariKDT4PT4, CaoKoolMonavariPT4LocalCY, CaoTodaGVDescendent, CaoTodaDerivedDT4, CaoGV0Fano, CaoMaulikToda2, CaoMaulikToda, COT2, COT1} for a partial list of developments). 

One of the main tools used traditionally in the construction and study of enumerative invariants is localization of virtual cycles by cosection, introduced by Kiem--Li \cite{KiemLiCosection}. In the setting of DT4 theory, the differential geometric version of cosection localization \`{a} la Borisov--Joyce was developed in \cite{SavCosection}, while the algebraic version \`{a} la Oh--Thomas was established in \cite{KiemPark}. 

Building on the results of \cite{OhThomasII}, the purpose of this paper is to complement the results of \cite{SavCosection} by establishing stronger, more general statements and simplifying their proofs and to prove the compatibility of these two approaches. Namely, we do the following:
\begin{enumerate}
\item Construct reduced virtual fundamental classes for d-manifolds with surjective cosections. Given the utility of reduced invariants in \cite{COT2,COT1,BaeKoolParkI}, this provides access to integral homological versions of these invariants.
\item Define integral homological cosection localized and reduced virtual fundamental classes for $(-2)$-shifted symplectic derived schemes in full generality, generalizing the results of \cite{SavCosection}.
\item Prove that the differential geometric and algebraic versions of these virtual classes agree whenever they are both defined.
\end{enumerate}

\subsection*{Statement of results} We now give summarized statements of the main results of the paper for the convenience of the reader. For full details, we refer to the main text.

We begin with reduced virtual fundamental classes of d-manifolds with surjective cosections.

\begin{thm*} [Theorem~\ref{thm:reduced class for d-manifold}, Subsection~\ref{subsection: red class for mult cosection d-manifold}]
Let $\xX$ be a compact, oriented d-manifold of virtual dimension $n$ with underlying topological space $X$, equipped with a surjective morphism $\underline{\sigma} \colon h^1(\bT_{\xX}) \to \bR_X^k$. Then $[\xX]\virt = 0 \in H_n(X,\bZ)$ and there exists a well-defined reduced virtual fundamental class $[\xX]\virt_{\mathrm{red}} \in H_{n+k}(X,\bZ)$. 
\end{thm*}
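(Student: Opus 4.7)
The plan follows the strategy of \cite{SavCosection}, adapted here to a surjective cosection of arbitrary rank $k$. The idea is to use the surjectivity of $\underline{\sigma}$ to split off a trivial summand from the obstruction bundle in local Kuranishi-type charts for $\xX$, glue the resulting reduced charts into a global compact oriented d-manifold $\xX_{\mathrm{red}}$ of virtual dimension $n+k$ with the same underlying topological space $X$, and deduce $[\xX]\virt = 0$ from an Euler class identity involving a trivial bundle.

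Around each $p \in X$ the d-manifold $\xX$ is modeled by a standard Kuranishi chart $(V, E, s, \psi)$ with $V$ a smooth manifold, $E \to V$ a vector bundle of rank $\dim V - n$, $s \in \Gamma(V, E)$ a section, and $\psi$ identifying $s^{-1}(0)$ with an open subset of $X$; under this description $h^1(\bT_{\xX})$ restricted to this open subset is the cokernel of $\dd s \colon TV|_{s^{-1}(0)} \to E|_{s^{-1}(0)}$. After shrinking $V$, the cosection $\underline{\sigma}$ lifts to a bundle surjection $\tilde{\sigma} \colon E \to \bR^k_V$; choosing a splitting $E \cong E' \oplus \bR^k_V$ with $E' = \ker \tilde{\sigma}$ then decomposes $s = s' + s''$ with $s'' = \tilde{\sigma}(s)$. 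Because $\tilde{\sigma}$ lifts a cosection defined on $\coker(\dd s)$, at each $p \in s^{-1}(0)$ one has $\dd s''_p = \tilde{\sigma}_p \circ \dd s_p = 0$, so $s''$ vanishes to second order along $s^{-1}(0)$. This second-order vanishing is the key input that permits, after an appropriate modification inside the $2$-category of d-manifolds, the construction of a reduced Kuranishi-type model of virtual dimension $n+k$ representing the same open subset of $X$.

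The next step is to glue these local reduced models into a global compact oriented d-manifold $\xX_{\mathrm{red}}$ with underlying topological space $X$. Different choices of lift $\tilde{\sigma}$ and splitting produce reduced models related by canonical $1$-isomorphisms, yielding coherent gluing data, and the machinery of \cite{JoyceDMan} assembles them into $\xX_{\mathrm{red}}$. The orientation on $\xX$ combined with the canonical orientation of the trivial bundle $\bR^k$ induces an orientation on $\xX_{\mathrm{red}}$, and one defines $[\xX]\virt_{\mathrm{red}} \defeq [\xX_{\mathrm{red}}]\virt \in H_{n+k}(X, \bZ)$ via the virtual class construction of \cite{BorisovJoyce}.

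Finally, $[\xX]\virt = 0$ follows because, after reduction, $\xX$ is locally identified with the derived zero locus in $\xX_{\mathrm{red}}$ of the zero section of the trivial bundle $\bR^k_X$. Standard compatibility of virtual classes with Gysin pullbacks then gives $[\xX]\virt = e(\bR^k_X) \cap [\xX_{\mathrm{red}}]\virt$, which vanishes since a trivial bundle has zero Euler class. The principal obstacle is the gluing step: the reduction depends on non-canonical choices of lift and splitting, and one must verify that the resulting reduced charts and their comparison isomorphisms organize coherently into a d-manifold rather than merely a topological space. This is the technical core of the argument, extending the rank-one construction of \cite{SavCosection} via the $2$-categorical flexibility of Joyce's d-manifolds.
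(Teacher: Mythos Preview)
Your overall strategy is plausible but takes a substantially harder route than the paper, and contains one genuine gap.

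The paper's key simplification, which you do not use, is that a compact d-manifold admits a \emph{global} Kuranishi chart $\xX \simeq \sS_{Y,E,s}$ (Joyce's Theorems~4.29, 4.34). Working with this single chart, the cosection lifts to a surjection $E \to \bR_Y^k$ with kernel bundle $E(\underline{\sigma})$, and the reduced class is defined in two equivalent ways: as the virtual class of the principal d-manifold $\sS_{Y, E(\underline{\sigma}), s(\underline{\sigma})_g}$ for a choice of metric splitting $g$, and via the cone-reduction criterion $C(s) \subseteq E(\underline{\sigma})|_X$ as $0^!_{E(\underline{\sigma})|_X}[C(s)]$. Well-definedness is then checked by (i) interpolating metrics to produce a bordism, and (ii) comparing two global charts in a standard form and verifying that the cone-theoretic formula is invariant. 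No $2$-categorical gluing whatsoever is required; the step you flag as ``the principal obstacle'' is simply bypassed. Your approach via local reduction plus coherent gluing in Joyce's $2$-category might be made to work, but it is a significant undertaking that the paper avoids.

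There is also a gap in your outline: the reduced local models do \emph{not} in general have underlying space $X$. The zero locus of $s'$ contains $s^{-1}(0)$ but is typically strictly larger (your observation that $s''$ vanishes to second order along $s^{-1}(0)$ does not force $s'^{-1}(0) = s^{-1}(0)$), so your reduced charts cut out open subsets of some enlargement $X_g \supseteq X$, not of $X$ itself. The paper handles this by arranging that $Y$ retracts onto $X$ and transporting the class via $(f_*)^{-1}(f_g)_*$, or equivalently by using the cone formulation, which lives on $X$ because $C(s)$ does. Without addressing this, your claim that $\xX_{\mathrm{red}}$ has underlying topological space $X$ is unjustified, and the Euler-class identity $[\xX]\virt = e(\bR_X^k) \cap [\xX_{\mathrm{red}}]\virt$---which is indeed the correct mechanism for the vanishing---is not properly set up.
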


We move on to $(-2)$-shifted symplectic derived algebraic geometry.

\begin{thm*} [Theorem~\ref{thm:cosection localization for derived schemes any cosection}, Theorem~\ref{thm:reduced for shifted symplectic}]
Let $(\undxX, \omega_{\undxX})$ be a projective, oriented $(-2)$-shifted symplectic derived scheme of virtual dimension $n = \rk \bL_{\undxX}|_X$, equipped with a morphism $\sigma \colon \bT_{\undxX}|_X[1] \to \oO_X$, where $X$ denotes the underlying classical scheme.
\begin{enumerate}
\item If $h^0(\sigma)$ fails to be surjective or is undefined on the closed subset $i \colon X(\sigma) \to X$ and is isotropic or non-degenerate with respect to the symplectic form $\omega_{\undxX}$, there exists a localized virtual fundamental class $[\undxX]\virt_{\mathrm{loc},\sigma} \in H_n(X(\sigma), \bZ)$ satisfying $i_\ast [\undxX]\virt_{\mathrm{loc},\sigma} = [\undxX]\virt \in H_n(X,\bZ)$.
\item If $\sigma$ is globally defined, surjective and isotropic with respect to the symplectic form $\omega_{\undxX}$, then $[\undxX]\virt = 0 \in H_n(X,\bZ)$ and there exists a reduced virtual fundamental class $[\undxX]\virt_\mathrm{red} \in H_{n+2}(X,\bZ)$.
\item If $\sigma$ is globally defined, surjective and non-degenerate with respect to the symplectic form $\omega_{\undxX}$, then $[\undxX]\virt = 0 \in H_n(X,\bZ)$ and there exists a reduced virtual fundamental class $[\undxX]\virt_\mathrm{red} \in H_{n+1}(X,\bZ)$.
\end{enumerate}
\end{thm*}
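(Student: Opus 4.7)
The plan is to reduce all three cases to the d-manifold theorem stated above by passing to the Borisov--Joyce truncation of $\undxX$ and using the comparison of \cite{OhThomasII} to match the resulting d-manifold virtual class with $[\undxX]\virt$.

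First I would set up the framework. Borisov--Joyce, fed by the orientation, produces a compact oriented d-manifold $\xX$ of virtual dimension $n$ with underlying topological space $X$ and virtual class agreeing with $[\undxX]\virt$. In a Kuranishi chart, the complex obstruction bundle $E$ of $\undxX$ carries the non-degenerate complex quadratic form $q$ coming from $\omega_\undxX$, and $\xX$ is obtained by cutting $E$ in half via a real subbundle $E_+ \subset E_\bR$ picked out, up to homotopy, by the orientation; accordingly $h^1(\bT_\xX)$ is a natural real quotient of $h^1(\bT_\undxX|_X)$.

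For parts (2) and (3), the key technical step is to convert $\sigma \colon \bT_\undxX|_X[1] \to \oO_X$ into a surjective real cosection $\sun \colon h^1(\bT_\xX) \to \bR_X^k$ on the d-manifold, whose rank $k$ records the degeneracy of $\sigma$ with respect to $\omega_\undxX$. I would use $\omega_\undxX$ to identify $\sigma$ with a holomorphic section $s$ of $E$, then separate real and imaginary parts to obtain two real cosections of $E_\bR$. In the isotropic case ($q(s,s)=0$), these restrict to pointwise linearly independent real cosections of $E_+$, producing a surjective $\sun$ of rank $k=2$. In the non-degenerate case ($q(s,s)\neq 0$), they become pointwise proportional on $E_+$, and only a surjective $\sun$ of rank $k=1$ survives. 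Applying the first theorem then yields reduced classes in $H_{n+k}(X,\bZ)$ for the correct $k$, while the existence of a surjective $\sun$ simultaneously forces $[\xX]\virt = [\undxX]\virt = 0$.

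For part (1), I would argue locally. On the open locus $U = X \setminus X(\sigma)$ the hypotheses of (2) or (3) hold, so the procedure above produces a surjective real cosection $\sun|_U$ on $\xX|_U$. Applying the localized virtual cycle construction for d-manifolds underlying the first theorem to $\xX|_U$, combined with properness of $\undxX$ and an exhaustion argument, gives a class $[\undxX]\virt_{\loc,\sigma} \in H_n(X(\sigma),\bZ)$; the identity $i_\ast [\undxX]\virt_{\loc,\sigma} = [\undxX]\virt$ is the standard normalization property of cosection-localized cycles and is compatible with \cite{OhThomasII}.

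The hard part will be the global coherence: the subbundle $E_+$ and the real/imaginary splitting used to build $\sun$ are only defined chart-locally, and one must verify that they assemble into a genuine morphism of sheaves on $\xX$ rather than a family of chart-local cosections compatible only up to ambiguous isomorphism. This requires refining the Borisov--Joyce gluing cocycle by the additional datum of $\sigma$ and generalizes the analysis of \cite{SavCosection}. The two genuinely new subtleties are the intrinsic extraction of the canonical rank-$1$ real quotient in the non-degenerate case of (3), and the propagation of the chart-local construction across $X(\sigma)$ in part (1), which must be arranged so that the resulting class lives in $H_n(X(\sigma),\bZ)$ with the correct pushforward.
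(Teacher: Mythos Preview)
Your overall reduction is correct and matches the paper: pass to the associated d-manifold $\xX_{dm}$, convert $\sigma$ into a surjective real cosection of rank $2$ (isotropic case) or rank $1$ (non-degenerate case), and invoke the d-manifold theorem; for part (1), do this over $U(\sigma)$ and cite the localization machinery of \cite{SavCosection}.

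Where you diverge from the paper is in what you call ``the hard part.'' You propose to build $E^+$ and the induced cosection chart-locally and then glue by refining the Borisov--Joyce cocycle, extending the analysis of \cite{SavCosection}. The paper's point is precisely to \emph{avoid} this: since $X$ is projective, Oh--Thomas \cite{OhThomasII} supply a single \emph{global} orthogonal complex Kuranishi chart $(Y,E,s)$ with $\xX_{dm}\simeq\sS_{Y,E^+,s^+}$. The splitting $E=E^+\oplus E^-$ and the representative $\sigma\colon E|_X\to\oO_X$ are then globally defined from the outset, and the construction of $\sigma^+$ reduces to a pointwise linear-algebra check (Theorem~4.1 in the paper) with no coherence issues whatsoever. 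Your route could in principle be made to work, but it is exactly the harder path that this paper is written to bypass; the projectivity hypothesis in the statement is there to enable the global chart.

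One minor correction: you flag the non-degenerate rank-$1$ extraction as ``genuinely new,'' but that case was already treated in \cite{SavCosection}; what is new here is the isotropic case and the uniform treatment via the global chart.
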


Using the machinery of complex Kuranishi charts of \cite{OhThomasII}, we are able to show that the differential geometric and algebraic localized or reduced virtual classes are equal in the appropriate sense.

\begin{thm*} [Theorem~\ref{thm:main thm}, Theorem~\ref{thm:isotropic surj cosection}, Theorem~\ref{thm:red for non-degenerate surj cosection}] Let $(\undxX, \omega_{\undxX})$ be a projective, oriented $(-2)$-shifted symplectic derived scheme, equipped with a morphism $\sigma \colon \bT_{\undxX}|_X[1] \to \oO_X$. 

Then, in each of the three scenarios (1)-(3) of the previous theorem, the image of the respective algebraic virtual fundamental cycle, constructed in \cite{KiemPark, BaeKoolParkI}, under the cycle class map $A_\ast(X, \bZ[1/2]) \to H_{2\ast}(X, \bZ[1/2])$ is equal to the image of the homological virtual fundamental class under the natural map $H_\ast(X,\bZ) \to H_\ast(X,\bZ[1/2])$.
\end{thm*}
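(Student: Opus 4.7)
The overall strategy is to reduce the global comparison to a local one on complex Kuranishi charts in the sense of \cite{OhThomasII}, and then bootstrap the unobstructed comparison theorem of Oh--Thomas to the reduced/localized setting. More precisely, cover $X$ by complex Kuranishi charts $(U,E,q,s)$ in which $\undxX$ is modeled by a complex manifold $U$, a complex vector bundle $E$ with non-degenerate quadratic form $q$, and a $q$-isotropic holomorphic section $s$. On such a chart, the differential geometric construction of \cite{BorisovJoyce, JoyceDMan} and the algebraic construction of \cite{OhThomasI} are both expressed in terms of the localized (square root) Euler class of $(E,q,s)$, and by \cite{OhThomasII} these already agree after $2$-inversion. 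It therefore suffices to show that both the algebraic cosection-localized/reduced constructions of \cite{KiemPark, BaeKoolParkI} and the differential geometric constructions of the previous two theorems above arise from a \emph{common} modification of the Kuranishi datum induced by $\sigma$.

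To carry this out, translate the cosection $\sigma \colon \bT_{\undxX}|_X[1] \to \oO_X$ on each chart into a bundle map $\bar{\sigma} \colon E|_{X \cap U} \to \oO$ with $\bar{\sigma} \circ s = 0$. In scenario (3), non-degeneracy of $\sigma$ with respect to $\omega_{\undxX}$ means that the line spanned by $\bar\sigma$ in $E^\vee$ is anisotropic, so it splits off an orthogonal trivial line summand $E \cong E' \oplus \oO$; the section $s$ lands in $E'$ by isotropy, and the new Kuranishi datum $(U,E',q',s)$ is a chart for the reduced theory, with virtual dimension raised by $1$. In scenario (2), the image of $\sigma$ together with its $q$-dual span an isotropic rank-$2$ summand; Witt decomposition gives $E \cong E'' \oplus \oO^2$ with $q$ restricting to a hyperbolic form on the trivial summand, and again $s$ factors through $E''$, producing a reduced chart with shift $+2$. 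In scenario (1), the same factorizations are performed over the open locus $X \setminus X(\sigma)$, so that on that locus the obstruction bundle is a trivial summand acting on $s$; the localized class is the square root Euler class of this reduced datum pushed forward along $X(\sigma) \hookrightarrow X$, as in \cite{KiemPark}. The point is that in each case the algebraic construction of \cite{KiemPark, BaeKoolParkI} is, by its definition via a Koszul resolution of the reduced obstruction theory, exactly the square root Euler class of the reduced chart, and the differential geometric construction of the preceding theorems is, by the d-manifold reduction procedure, the Borisov--Joyce class of the same reduced chart. Hence the theorem of \cite{OhThomasII} applied to $(U,E'',q'',s)$ (or $(U,E',q',s)$) yields equality in $H_{2\ast}(X,\bZ[1/2])$ of the two local contributions.

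The global statement then follows by patching the local identifications. Here one uses that the complex Kuranishi atlas of \cite{OhThomasII} already globalizes the unobstructed comparison, and that the reductions above are natural in transition maps of charts because the cosection $\sigma$ is a globally defined (or well-defined on $X(\sigma)$) map of complexes. In particular, the Witt decompositions in scenarios (2) and (3) are canonical up to contractible choice over the locus where $\sigma$ does not vanish, so the reduced Kuranishi data glue on overlaps up to homotopy of orientations, which is all that is needed for the cycle classes to coincide after $\otimes \bZ[1/2]$.

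The main obstacle I expect is scenario (2), the isotropic surjective case with shift $+2$. There the Witt splitting is not canonical (the choice of isotropic complement to $\im\sigma$ is a torsor under an affine space), and one must check that different splittings yield cobordant reduced local models whose square root Euler classes agree; signs are also more delicate here because a rank-$2$ hyperbolic summand introduces an orientation twist that must be reconciled with the orientation of $\undxX$ induced by $\omega_{\undxX}$. Checking the orientation conventions on both the algebraic \cite{KiemPark, BaeKoolParkI} and differential geometric \cite{SavCosection} sides, and verifying they correspond under the comparison of \cite{OhThomasII}, is the technical heart of the argument. The non-degenerate case (3) and the generic localized case (1) are comparatively direct once this is done, because the reduction there factors off a rank-$1$ piece or is supported on a closed subset where the local model is already essentially non-degenerate.
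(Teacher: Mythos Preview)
Your proposal differs from the paper's approach in one fundamental way, and contains a genuine gap in scenario~(1).

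\textbf{Global versus local.} You propose to cover $X$ by local complex Kuranishi charts, reduce the obstruction bundle on each chart, apply the Oh--Thomas comparison locally, and then patch. The paper instead uses the main result of \cite{OhThomasII} that there is a \emph{single global} orthogonal complex Kuranishi chart $(Y,E,s)$ for $X$ (with $E$ holomorphic along $X$ and smooth elsewhere). This completely eliminates the gluing step you identify as the technical heart. In particular, all of your worries about non-canonical Witt splittings, torsors of isotropic complements, and orientation twists on overlaps simply do not arise: once one has a global chart, the reduced bundle $\tilde E = \langle \sigma^\vee\rangle^\perp / \langle \sigma^\vee\rangle$ (isotropic case) or $E_{\mathrm{red}} = \ker\sigma$ (non-degenerate case) is a global object, and the comparison reduces to a single application of the Oh--Thomas formula $\sqrt{e}(E,s)[Y] = 0^!_{E^+}[C(s^+)]$ to that reduced bundle. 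For scenarios~(2) and~(3) your strategy is morally correct but unnecessarily hard; the paper's route is shorter precisely because Oh--Thomas~II already did the gluing work.

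\textbf{The gap in scenario~(1).} Your description of the Kiem--Park localized class as ``the square root Euler class of this reduced datum pushed forward along $X(\sigma)\hookrightarrow X$'' is not what \cite{KiemPark} constructs, and there is no reduced datum to which you could apply Oh--Thomas: the cosection degenerates over $X(\sigma)$, so no global (or even local near $X(\sigma)$) splitting $E\cong E'\oplus\oO$ exists. The Kiem--Park class is defined via a blowup of $Y$ along $X$, passing to the quotient bundle $\tilde E = L^\perp/L$ on the blowup, and a formula combining $\sqrt{e}(\tilde E,\tilde t)$ with a Gysin map for the exceptional divisor. The paper's key technical step (Lemma~\ref{lem:formula for square root}) is to unwind this blowup definition directly in homology and prove the explicit intersection formula
\[
\sqrt{e}(E,s;\sigma^\vee)[Y] \;=\; [C(s^+)]\cap[\Gamma_{(\sigma^+)^\vee}] \quad\text{in } H_\ast(X(\sigma),\bZ[1/2]),
\]
which is then recognized as the defining expression of the differential-geometric localized class from \cite{SavCosection}. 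Your proposal does not engage with the blowup construction at all, and the reduction you sketch would only make sense away from $X(\sigma)$, where there is nothing to localize.
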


In Section~\ref{sec:applications}, we apply these results to construct reduced invariants and obtain integrality properties for surface counting invariants and cosection localized virtual classes of Calabi--Yau fourfolds, as well as Gopakumar--Vafa type invariants of holomorphic symplectic fourfolds.

\subsection*{Organization of the paper} In \S2, we recall useful background on d-manifolds, $(-2)$-shifted symplectic derived schemes, their local properties and virtual classes. \S3 is devoted to reduced virtual fundamental classes of d-manifolds, while \S4 treats cosection localized and reduced virtual fundamental classes of $(-2)$-shifted symplectic derived schemes. In \S5, we prove the compatibility between the differential geometric and algebraic virtual classes in all cases. Finally, \S6 explores applications of our results to virtual classes and enumerative invariants in DT4 theory.

\subsection*{Acknowledgements} The author is indebted to Young--Hoon Kiem for prior collaborations and many enlightening conversations on cosection localization and other topics over the years. We would also like to greatly thank Hyeonjun Park and Martijn Kool for their interest, helpful discussions and motivating questions that encouraged the writing of this paper. Finally, we are grateful to the organizers and participants of the Workshop on moduli spaces, virtual invariants and shifted symplectic structures held at KIAS in June 2023 for providing a stimulating environment for discussions related to the topics of the paper.

\subsection*{Notation and conventions} 
We work with Borel-Moore homology and singular (co)homology with integer coefficients. These are equal for compact spaces and afford us the requisite flexibility to work with fundamental classes of (non)compact spaces. We refer the reader to \cite{Bredon, Iversen} for a comprehensive treatment. To avoid possible confusion, we always specify the coefficients we use for homology groups or Chow groups.

We work with the theory of $\Cinf$-schemes and derived manifolds developed by Joyce \cite{JoyceDMan}. Our $\Cinf$-schemes are separated, second countable and locally fair, meaning that their underlying topological space is Hausdorff, second countable and they are locally modelled by the spectrum of a finitely generated $\Cinf$-ring. By definition, such $\Cinf$-schemes can be the underlying scheme of a d-manifold. Unless otherwise stated, the underlying topological spaces of $\Cinf$-schemes and d-manifolds are assumed to be Euclidean Neighbourhood Retracts (ENR) for simplicity. This is automatically the case for complex analytic spaces or algebraic schemes. For the purpose of working with virtual classes and cosections, this assumption may be removed and is thus not restrictive (see \cite[Section~5.2]{SavCosection}).

All algebraic and derived schemes are defined over the complex numbers $\bC$. Classical and derived algebraic schemes and complex analytic spaces are assumed separated and of finite type. The reader can consult \cite{Toen} for general background on derived algebraic geometry and shifted symplectic structures.

$\undxX$ will typically denote a derived scheme and $\omega_{\undxX}$ a $(-2)$-shifted symplectic form on $\undxX$. $\xX$ will typically denote a d-manifold and $X_{\Cinf}$ its underlying $\Cinf$-scheme. $X$ will always be used for the underlying topological space of $\undxX$ or $\xX$ and sometimes, by abuse of notation, the classical truncation of $\undxX$ as well. Sometimes, $X$ will also denote the complex analytic space associated to the classical truncation $X$ of $\undxX$. These distinctions will be explicitly stated or should be clear from context throughout the paper. In general, we will refer to the notation $X_{\Cinf}$ when we need to use the existence of quasicoherent sheaves on a $\Cinf$-scheme, whereas we only consider homology groups of the underlying topological spaces.

\section{Some background}

In this section, we collect some requisite background and introduce terminology that will be used repeatedly in the rest of the paper. A slightly more detailed account of some overlapping material is given in \cite[Section~2]{SavCosection}.

\subsection{$\Cinf$-schemes and d-manifolds}

The main references for the theory
of $\Cinf$-schemes and d-manifolds we use are the books \cite{JoyceCoo} and \cite{JoyceDMan} by Joyce.

Roughly speaking, under the conventions of this paper, a $\Cinf$-scheme is a
locally ringed space that is locally isomorphic to the (real) spectrum of the
$\Cinf$-ring $\Cinf(\bR^n)/I$, where $I$ is an ideal of smooth functions on $\bR^n$. $\Cinf$-schemes admit a theory of (quasi)coherent sheaves and vector bundles, which largely mirrors that of sheaves on algebraic schemes.

Building on $\Cinf$-geometry, d-manifolds form a theory of derived differential geometry which is analogous to that of quasi-smooth derived or dg-schemes. Their local structure is described by \cite[Example~1.4.4]{JoyceDMan} as follows.

A principal d-manifold $\xX = \sS_{Y,E,s}$
is determined by local Kuranishi charts, i.e. the data $(Y,E,s)$ of a smooth manifold $Y$, a smooth vector bundle $E$ on $Y$ and a smooth section $s \in \Cinf(E)$: The underlying $\Cinf$-scheme $X_{\Cinf}$ of $\xX$ is the affine scheme obtained as the spectrum of the $\Cinf$-ring $\Cinf(Y)/I$, where $I = (s)$ is the ideal generated by the components of the section $s$. The higher data are expressed by the morphism $E^\vee |_{X_{\Cinf}} \xrightarrow{s^\vee} I/I^2$ of $\oO_{X_{\Cinf}}$-modules, so that the underlying topological space $X$ is the zero locus of the section $s$. Moreover, the tangent complex
(or virtual tangent bundle) of $X$ is the two-term complex
\begin{align} \label{eq:tangent complex of d-manifold}
\bT_\xX = [T_Y|_{X_{\Cinf}} \xrightarrow{\dd s} E|_{X_{\Cinf}}].
\end{align}
where for any choice of connection $\nabla$ on $E^\vee$, $\dd s$ is the restriction of $\nabla s \colon T_Y \to E$ to $X_{\Cinf}$.

In general, it follows by definition that every d-manifold $\xX$ is locally equivalent to a principal d-manifold. When $\xX$ is compact (i.e. its underlying topological space $X$ is
compact), Joyce shows that this is true globally.

\begin{thm-defi}\cite[Theorems~4.29, 4.34]{JoyceDMan} \label{thm-defi:global presentation of d-manifold} Let $\xX$ be a compact d-manifold. Then $\xX \simeq \sS_{Y,E,s}$ where $Y$ is an open subset of a Euclidean space $\bR^N$. We refer to this equivalence as a presentation of $\xX$ as a principal d-manifold or a global Kuranishi chart for $\xX$ (or $X$ when $\xX$ is clear from context). The virtual dimension of $\xX$ is then equal to $\mathrm{vd} = N-\rk E$.
\end{thm-defi}

The obstruction sheaf of $\xX$ is the coherent sheaf $\Ob_\xX := h^1(\bT_\xX) \in \Coh(X_{\Cinf})$.

A real cosection on $\xX$ is a morphism $\sigma \colon \Ob_\xX \to \bR_X$ of coherent sheaves on $X_{\Cinf}$, where $\bR_X$ denotes the trivial real line bundle on $X$. Analogously, a complex cosection is a morphism $\sigma \colon \Ob_\xX \to \bC_X = \bR_X^2$.

Finally, there is a notion of orientation for a d-manifold
$\xX$, which in the case of a principal d-manifold $\sS_{Y,E,s}$ roughly amounts to
an orientation of the determinant line bundle $\det \bT_\xX$ of the tangent complex~\eqref{eq:tangent complex of d-manifold}. As in the case of
manifolds, compact, oriented d-manifolds admit virtual fundamental classes of the expected dimension.

\begin{thm} \cite[Section~13]{JoyceDMan} If $\xX$ is a compact, oriented d-manifold
with underlying topological space $X$ and virtual dimension $\mathrm{vd}$, there exists a well-defined virtual fundamental
class $[\xX]\virt \in H_{\mathrm{vd}}(X,\bZ)$, which only depends on the bordism
class of $\xX$.
\end{thm}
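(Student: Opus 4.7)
The plan is to reduce to the principal model from Theorem-Definition~\ref{thm-defi:global presentation of d-manifold} and construct the class as a localized Euler class. I would first choose a global Kuranishi chart $\xX \simeq \sS_{Y,E,s}$ with $Y$ an open subset of $\bR^N$ and $E \to Y$ a real vector bundle of rank $N - \mathrm{vd}$, so that $X = s^{-1}(0)$. The orientation of $\xX$ amounts to a trivialization of $\det \bT_\xX = \det T_Y|_X \otimes (\det E|_X)^{-1}$; combined with the canonical orientation of $T_Y$ inherited from $\bR^N$, this induces an orientation on $E$ in a neighborhood of $X$, which one can extend to a global orientation after shrinking $Y$ without altering subsequent steps.

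The oriented vector bundle $E \to Y$ then carries a Thom class $\tau_E \in H^{N-\mathrm{vd}}(E, E \setminus 0;\bZ)$, and I would set
\begin{equation*}
[\xX]\virt := [Y] \cap s\sta \tau_E \ \in \ H^{\mathrm{BM}}_{\mathrm{vd}}(X;\bZ) = H_{\mathrm{vd}}(X;\bZ),
\end{equation*}
where $[Y] \in H^{\mathrm{BM}}_N(Y;\bZ)$ is the Borel-Moore fundamental class of the oriented manifold $Y$, the pullback $s\sta \tau_E$ lies in $H^{N-\mathrm{vd}}(Y, Y \setminus X;\bZ)$, and the final equality uses compactness of $X$. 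This is the standard localized Euler class of $(E, s)$.

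To prove well-definedness, I would verify that stabilization moves $(Y, E, s) \leadsto (Y \times \bR^k,\, E \oplus \underline{\bR^k},\, s \oplus \mathrm{id}_{\bR^k})$, together with restrictions to open neighborhoods of $X$, leave the class unchanged; this follows from the multiplicativity $\tau_{E \oplus \underline{\bR^k}} = \tau_E \cup \tau_{\underline{\bR^k}}$ of Thom classes under direct sum and the naturality of cap product. Invoking Joyce's common-refinement results for Kuranishi data, any two global presentations of the same $\xX$ are connected by a zig-zag of such moves, so the resulting class is intrinsic to $\xX$.

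Bordism invariance is obtained by running the same construction on a compact, oriented d-manifold with boundary $\yY$ realizing a bordism between $\xX_0$ and $\xX_1$: a global Kuranishi chart for $\yY$ restricting to chosen charts on $\partial \yY = \xX_0 \sqcup \overline{\xX_1}$ yields a Borel-Moore class on the total space whose boundary map sends it to $[\xX_0]\virt - [\xX_1]\virt$ in the homology of the common ambient space, forcing equality. The main obstacle is the well-definedness step: the delicate bookkeeping of orientations under arbitrary stabilizations and the existence of compatible common refinements for any two Kuranishi charts presenting the same d-manifold form the technical core and rely essentially on the full strength of Joyce's d-manifold machinery, since two distinct sections of possibly different bundles cutting out the same topological $X$ can a priori differ in subtle ways that the localized Euler class construction must see through.
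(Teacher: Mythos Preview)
The paper does not prove this theorem; it is quoted from Joyce \cite[Section~13]{JoyceDMan} as background. What the paper does supply, immediately after the statement, is an alternative \emph{description} of the class via the topological normal cone: following \cite{SavCosection} and \cite{Siebert}, one forms $C(s) \subseteq E|_X$ with its fundamental class $[C(s)] \in H_N(C(s))$ and sets $[\xX]\virt = 0_{E|_X}^![C(s)]$.

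Your construction via the localized Euler class $[Y] \cap s\sta \tau_E$ is correct and classically equivalent to this normal-cone formulation; the equivalence is standard (deformation to the normal cone identifies the Thom-class cap product with the Gysin pullback of $[C(s)]$). The difference is one of packaging: your Thom-class route is the most direct way to \emph{define} the class and check independence under stabilizations, whereas the paper's normal-cone phrasing is better adapted to the cosection-localization arguments that follow, since the cone reduction criterion \cite[Proposition~4.1]{SavCosection} constrains $C(s)$ itself rather than the Thom class. Your sketch of well-definedness and bordism invariance is sound in outline, and you rightly flag that the real work---compatible common refinements of Kuranishi presentations and orientation bookkeeping---lives in Joyce's machinery rather than in anything one could write down here.
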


By \cite{SavCosection}, we can give a simple description of $[\xX]\virt$ using a global Kuranishi chart $\xX \simeq \sS_{Y,E,s}$. Namely, following \cite{Siebert}, we can associate a (topological) normal cone $C(s) \subseteq E|_X$ to the section $s$ together with a homological fundamental class $[C(s)] \in H_N(C(s))$. Then, intersecting with the zero section of the oriented bundle $E|_X$ yields the virtual fundamental class so that $[\xX]\virt = 0_{E|_X}^![C(s)] \in H_{\mathrm{vd}}(X, \bZ)$.

\subsection{$(-2)$-shifted symplectic derived schemes} For a good overview of derived algebraic geometry, we refer the reader to \cite{Toen}, while for shifted symplectic structures the reader can consult the original paper \cite{PTVV}.

Roughly speaking, a derived scheme $\undxX$ is locally modelled by the (derived) spectrum $\dspec A$ where $A$ is a commutative differential (negatively) graded $\bC$-algebra. A $(-2)$-shifted symplectic structure on $\undxX$ is given by a 2-form
\begin{equation*}
    \omega_{\undxX} \colon \bT_{\undxX} \wedge \bT_{\undxX} \lr \oO_{\undxX}[-2]
\end{equation*}
which is non-degenerate and comes together with extra data that make it closed.

A $(-2)$-shifted symplectic scheme is a pair $(\undxX, \omega_{\undxX})$ consisting of a derived scheme $\undxX$ and a symplectic structure on $\undxX$. $(-2)$-shifted symplectic schemes $(\undxX, \omega_{\undxX})$ also admit an appropriate notion of orientation on a  (cf. \cite{BorisovJoyce}). 

The obstruction sheaf of $\undxX$ is defined to be the sheaf $\Ob_{\undxX} = h^1(\mathbb{T}_{\undxX} |_{X})$, while the obstruction space at a point $x \in \underline{\xX}$ is the vector space $\Ob(\underline{\xX}, x) = h^1(\mathbb{T}_{\underline{\xX}}|_x)$. A cosection is a morphism $\sigma \colon \bT_{\undxX}|_X[1] \to \oO_X$. We will often work with the induced map on cohomology $h^0(\sigma) \colon \Ob_{\undxX} \to \oO_X$.

In particular, the symplectic form $\omega_{\underline{\xX}}$ induces a family of non-degenerate quadratic forms $q_x \colon \Ob(\undxX, x) \to \bC$.

In \cite{JoyceSch}, the local structure of $(\undxX, \omega_{\undxX})$ around every point $x \in X$ is described by Kuranishi charts in Darboux form as follows. Even though we won't need this, we do mention the important fact that different Kuranishi charts can be glued in the appropriate, homotopic sense.

\begin{thm-defi} \emph{\cite[Example~5.16]{JoyceSch}} \label{Darboux chart}
Let $x \in \undxX$ be a point of a $(-2)$-shifted symplectic derived scheme $(\undxX, \omega_{\undxX})$. Then there exists a Zariski open neighbourhood $\underline{U} = \dspec A \to \undxX$ around $x$ such that:
\begin{enumerate}
    \item $(A, \delta)$ is a commutative differential graded algebra, with degree zero part $A^0$ a smooth $\bC$-algebra of dimension $N$, with a set of \'{e}tale coordinates $\{ x_i \}_{i=1}^N$. 
    \item As a graded algebra, $A$ is freely generated over $A^0$ by variables $\{ y_j \}_{j=1}^r$ and $\{ z_i \}_{i=1}^N$ in degrees $-1$ and $-2$ respectively.
    \item Letting $\omega_A$ be the pullback of $\omega_{\undxX}$ to $\dspec A$, there are invertible elements $q_1, ..., q_r \in A^0$ such that
    \begin{equation*}
        \omega_A = \sum_{i=1}^N \dd z_i \dd x_i + \sum_{j=1}^r \dd (q_j y_j) \dd y_j .
    \end{equation*}
    \item The differential $\delta$ of $A$ is determined by the equations
    \begin{align*}
        \delta x_i = 0, \quad \delta y_j = s_j, \quad \delta z_i = \sum_{j=1}^r y_j \left( 2q_j \frac{\partial s_j}{\partial x_i} + s_j \frac{\partial q_j}{\partial x_i} \right)
    \end{align*}
    where the elements $s_j \in A^0$ satisfy
    \begin{equation*}
        q_1 s_1^2 + ... + q_r s_r^2 = 0.
    \end{equation*}
\end{enumerate}

We write $Y = \Spec A^0$, $E$ for the trivial vector bundle of rank $r$ whose dual has basis given by the variables $y_j$ and $F$ for the trivial vector bundle whose dual has basis given by the variables $z_i$. We refer to all of the above data as a \emph{derived algebraic Darboux chart} for $\undxX$ at $x$.

It follows that the classical truncation $U = h^0(\underline{U}) \subseteq X$ is equipped with the following data:
\begin{enumerate}
    \item A smooth affine scheme $Y$ of dimension $N$.
    \item A vector bundle $E$ on $Y$ of rank $r$ with a non-degenerate quadratic form $q$, given by
    \begin{align*}
        q_u(y_1, ..., y_r) = q_1(u) y_1^2 + ... + q_r(u) y_r^2
    \end{align*}
    on each fiber of $E$ over $u \in U$. Thus, $E$ is naturally an orthogonal bundle, i.e. a $O(r,\bC)$-bundle.
    \item A $q$-isotropic section $s \in \Gamma(E)$ whose scheme-theoretic zero locus is $U \subseteq V$.
    \item A three-term perfect complex of amplitude $[0,2]$
    \begin{align*}
        \mathbb{E}_\bullet := \bT_{\underline{U}}|_U = [T_Y|_U \xrightarrow{ds} E|_U \xrightarrow{t} F|_U].
    \end{align*}
    such that $q_u$ gives a non-degenerate quadratic form on each obstruction space $\Ob(\undxX, u)$. 
    
    We have a natural morphism $\phi \colon \mathbb{E}^\bullet = (\mathbb{E}_\bullet)^\vee \to \bL_U$, which is surjective in degree $-1$ and an isomorphism in degree $0$. $\mathbb{E}^\bullet$ is symmetric in the sense that $\mathbb{E}^\bullet \cong \mathbb{E}_\bullet[2]$ via $q$ and the morphism $\phi$ is called the induced (three-term) \emph{symmetric obstruction theory} on $U$. We write $\mathrm{vd} = \rk \mathbb{E}_\bullet$ for the virtual dimension of $U$. 
\end{enumerate}

We refer to the data $(Y,E,s)$ as an \emph{algebraic Darboux chart} or \emph{algebraic, orthogonal Kuranishi chart} for $X$ at $x$.

Working complex analytically, since the functions $q_j$ are non-zero at $x$, by possibly shrinking $V$ around $x$, they admit square roots $q_j = r_j^2$ and after re-parametrizing $y_j \mapsto \frac{y_j}{r_j}$ we may assume that $q_j(u) = 1$. We then refer to the data $(Y,E,s)$ as an \emph{analytic Darboux chart} or \emph{analytic, orthogonal Kuranishi chart} for $X$ at $x$.
\end{thm-defi}

\subsection{Virtual fundamental classes}  \label{subsection: BJ and OT classes} Let $(\undxX, \omega_{\undxX})$ be a proper $(-2)$-shifted symplectic derived scheme. As in the previous subsection, let $\mathbb{E}^\bullet = \bL_{\undxX}|_X \to \bL_X$ be the induced (three-term) symmetric obstruction theory on $X$ with virtual dimension $\mathrm{vd} = \rk \mathbb{E}_\bullet$.

Suppose also that  $(\undxX, \omega_{\undxX})$ admits an orientation as defined in \cite{BorisovJoyce}, i.e. a trivialization of the determinant of $\mathbb{E}_\bullet$.

Under these assumptions, Borisov--Joyce \cite{BorisovJoyce} define a virtual fundamental class $[\undxX]\virt \in H_{\mathrm{vd}}(X,\bZ)$. Their method involves taking real smooth truncations of a cover of $\undxX$ by orthogonal Kuranishi charts and performing homotopical gluing to construct an associated compact, oriented d-manifold $\xX_{dm}$ of virtual dimension $\mathrm{vd}$. While $\xX_{dm}$ is not canonical, Borisov--Joyce show that it is well-defined up to bordism and hence so is its virtual fundamental class $[\xX_{dm}]\virt \in H_{\mathrm{vd}}(X,\bZ)$. Thus, one can unambiguously define $[\undxX]\virt := [\xX_{dm}] \in H_{\mathrm{vd}}(X,\bZ)$.

Now, suppose in addition that $\mathrm{vd} \in 2\bZ$ and $\mathbb{E}_\bullet$ admits a global resolution 
$$\mathbb{E}_\bullet \cong [B \xrightarrow{d} E \cong E^\vee \xrightarrow{d^\vee} B^\vee], $$
so that the intrinsic normal cone $\cC_X$ \cite{BehFan} of $X$ embeds as a closed substack of the vector bundle stack $[E/B]$. For example, this is the case when $X$ is projective by \cite[Section~4]{OhThomasI}. The choice of orientation of $(\undxX, \omega_{\undxX})$ then implies that $E$ is a special orthogonal bundle, i.e. an $SO(2r, \bC)$-bundle for some integer $r$. 

With these at hand, Oh--Thomas \cite{OhThomasI} use the fact that the cone $C = \cC_X \times_{[E/B]} E \subseteq E$ is isotropic and a localized version of the square root Euler class $\sqrt{e}$ of Edidin--Graham \cite{EdidinGrahamQuadric} to define an algebraic virtual fundamental cycle $[X]\virt = \sqrt{e}(E|_C, \tau)[C] \in A_{\mathrm{vd}/2}(X,\bZ[1/2])$. Here $\tau$ denotes the tautological section of the bundle $E|_C$, obtained by pullback along the projection map $C \to X$.

Assuming that $X$ is projective, in their follow-up work \cite{OhThomasII}, Oh--Thomas show that the image of the virtual cycle $[X]\virt$ under the cycle class map $A_{\mathrm{vd}/2}(X,\bZ[1/2]) \to H_{\mathrm{vd}}(X,\bZ[1/2])$ agrees with the image of $[\undxX]\virt$ under the natural map $H_{\mathrm{vd}}(X,\bZ) \to H_{\mathrm{vd}}(X,\bZ[1/2])$.

They achieve this by showing that there exists a \emph{global} orthogonal, complex Kuranishi chart $(Y,E,s)$ of $X$ in the sense of Theorem-Definition~\ref{Darboux chart}, which is suited to the $(-2)$-shifted symplectic structure of $\undxX$ and is analytic upon restriction on $X$ but smooth on the complement $Y \setminus X$. The following properties of the chart will be relevant for us:

\begin{enumerate}
\item $Y$ is a complex manifold, $E$ is an oriented smooth bundle over $Y$ and $s$ is a smooth section of $E$ such that $X \cong Z(s) \subseteq Y$.
\item $E$ admits a smooth, non-degenerate quadratic form $q \colon E \otimes E \to \bC_Y$ with respect to which $s$ is isotropic.
\item These data are compatible with the Darboux charts used by Borisov--Joyce \cite{BorisovJoyce}.
\item $E|_X$ is naturally a holomorphic vector bundle on the complex analytic space $X$, the derivative $\dd s \colon T_Y|_X \to E|_X$ is a holomorphic morphism of holomorphic bundles on $X$, and the shifted tangent complex $\bT_{\undxX}|_X [1]$ is given by
\begin{align}
\bT_{\undxX}|_X [1] \simeq [ T_Y|_X \xrightarrow{\dd s} E|_X \simeq E|_X^\vee \xrightarrow{\dd s^\vee} \Omega_Y|_X],
\end{align}
where the middle isomorphism is induced by the quadratic form $q$, which is also holomorphic when restricted to $X$.
\end{enumerate}

Since $E$ is a special orthogonal bundle, we may take $E \cong E^+ \otimes_\bR \bC = E^+ \oplus E^-$, a splitting into the real and imaginary parts of $E$, on which $\mathrm{Re} (q)$ is positive and negative definite respectively. $E^+$ inherits a natural orientation. Write $s^+$ for the projection of $s$ onto $E^+$ whose zero locus is $X \subseteq Y$, since $s$ is isotropic. The properties of the chart $(Y,E,s)$ then imply that we may take $\xX_{dm} = \sS_{Y,E^+,s^+}$ and we have the equality $$[X]\virt = [\xX_{dm}] \in H_{\mathrm{vd}}(X,\bZ[1/2]).$$

\section{Reduced virtual fundamental classes}

Let $\xX$ be a compact, oriented d-manifold of virtual dimension $n$ with underlying topological space $X$, whose truncation is the $\Cinf$-scheme $X_{\Cinf}$. Throughout this section, $\sigma \colon \Ob_{\xX} \to \bR_{X}$ will be a surjective real cosection for $\xX$. 

To fix more notation, by Theorem-Definition~\ref{thm-defi:global presentation of d-manifold}, $\xX$ is equivalent to a principal d-manifold $\sS_{Y,E,s}$, where $Y$ is an open subset of some Euclidean space $\bR^N$, $E$ is a smooth oriented vector bundle on $Y$ and $s$ a smooth section of $E$. Therefore, a model for the tangent complex of $\xX$ is given by~\eqref{eq:tangent complex of d-manifold}.

\subsection{Reduced classes via reduced d-manifold structure} We may assume that $Y$ retracts onto $X$ so that we have an isomorphism
\begin{align}
f_\ast \colon H_i(X) \lr H_i(Y)
\end{align}
is an isomorphism for any $i$, where $f \colon X \to Y$ denotes the inclusion map.
\smallskip

Given these data, we obtain a surjection $\sigma_E \colon E|_{X_{\Cinf}} \to \Ob_\xX \to \bR_X$ of sheaves on $X_{\Cinf}$. In particular, after possibly shrinking $Y$ around $X$, we may assume that $\sigma_E$ extends to a surjective morphism of vector bundles $\sigma_E \colon E \to \bR_Y$ on $Y$. Write $E(\sigma)$ for the kernel bundle of this map so that we have an exact sequence of vector bundles
\begin{align} \label{eq:loc 2.2}
0 \lr E(\sigma) \lr E \lr \bR_Y \lr 0.
\end{align}

Note that the orientation of $E$ induces a natural orientation on $E(\sigma)$.

After picking a smooth metric $g$ on $E$, we obtain a smooth splitting of~\eqref{eq:loc 2.2}
\begin{align}
 E \cong_{g} E(\sigma) \oplus \bR_Y.
\end{align}

Let $s(\sigma)_g$ be the first component of $s$ under this isomorphism. We obtain an induced principal d-manifold $\xX_g = \sS_{Y, E(\sigma), s(\sigma)_g}$ of virtual dimension $n+1$. 

Since $E(\sigma)$ is oriented, $\xX_g$ is an oriented d-manifold. Moreover, as $\xX$ is compact, after possibly shrinking $Y$ around $X$, we may assume that $E$ and $s$ extend to the closure $\overline{Y}$ of $Y$ in $\bR^N$ and, in addition, the vanishing locus of $s(\sigma)_g$ does not meet the boundary $\overline{Y} \setminus Y$. This implies that the underlying topological space $X_g$ of $\xX_g$ is compact, as a closed and bounded subset of $\overline{Y}$, which is disjoint from $\overline{Y} \setminus Y$. We conclude that $\xX_g$ is a compact, oriented d-manifold.

Write $f_g \colon X_g \to Y$ for the inclusion map. We may now give the following definition.

\begin{defi}
The \emph{reduced J-virtual fundamental class} of the d-manifold $\xX$ is defined as the J-virtual fundamental class of the d-manifold $\xX_g$, that is, 
\begin{align} \label{eq:reduced J-class}
[\xX]\virt_{\mathrm{red}, J} := (f_\ast)^{-1} (f_g)_\ast [\xX_g]\virt \in H_{n+1}(X).
\end{align}
\end{defi}

A priori, $[\xX]\virt_{\mathrm{red}, J}$ depends on the choices of the principal model $\sS_{Y,E,s}$ of $\xX$ as well as the metric $g$. We will see later on that it is independent of choices and hence well-defined.

\begin{rmk}
Given a choice of presentation $\xX \simeq \sS_{Y,E,s}$, it is easy to see that different choices of metric $g_1$ and $g_2$ result in bordant d-manifolds $\xX_{g_1}$ and $\xX_{g_2}$ over $Y$ by interpolating between the metrics via the path $g_t = (1-t) g_1 + t g_2$. Since the virtual fundamental class of a d-manifold only depends on its bordism class, it follows that $[\xX]\virt_{\mathrm{red}, J}$ is independent of the choice of metric and can only possibly depend on the choice of principal model $\sS_{Y,E,s}$.
\end{rmk}

\subsection{Reduced classes via cone reduction} We now give an alternative definition of the reduced class $[\xX]\virt_{\mathrm{red}}$. We keep using the notation of the previous subsection.

Recall that by the smooth cone reduction criterion \cite[Proposition~4.1]{SavCosection}, the normal cone $C(s)$ is contained entirely in the kernel bundle $E(\sigma)|_X$.

\begin{defi}
The \emph{reduced BF-virtual fundamental class} of the d-manifold $\xX$ is defined as
\begin{align} \label{eq:reduced BF-class}
[\xX]\virt_{\mathrm{red}, BF} := 0_{E(\sigma)|_X}^! [C(s)] \in H_{n+1}(X).
\end{align}
\end{defi}

Again, we shall soon see that $[\xX]\virt_{\mathrm{red}, BF}$ is independent of the choice of principal model $\sS_{Y,E,s}$ of $\xX$.

\subsection{Reduced classes are well-defined} In this subsection, we check that the definitions of reduced classes we have given are consistent with each other and independent of all choices involved, as promised.

\begin{prop}
$ [\xX]\virt_{\mathrm{red}, J} = [\xX]\virt_{\mathrm{red}, BF}$.
\end{prop}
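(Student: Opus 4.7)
Since $Y$ retracts onto $X$, the pushforward $f_\ast\colon H_{n+1}(X)\to H_{n+1}(Y)$ is an isomorphism, so the claim is equivalent to the equality $f_\ast[\xX]\virt_{\mathrm{red},BF} = (f_g)_\ast[\xX_g]\virt$ in $H_{n+1}(Y)$. Applying Siebert's formula to the principal d-manifold $\xX_g = \sS_{Y,E(\sigma),s(\sigma)_g}$ gives $[\xX_g]\virt = 0_{E(\sigma)|_{X_g}}^!\,[C(s(\sigma)_g)]$, while by definition $[\xX]\virt_{\mathrm{red},BF} = 0_{E(\sigma)|_X}^!\,[C(s)]$, the latter making sense by the cone reduction $C(s) \subseteq E(\sigma)|_X$. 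Using the naturality of the refined zero-section Gysin map with respect to the closed embeddings $X,X_g \hookrightarrow Y$ and the induced inclusions $\iota_X\colon E(\sigma)|_X\hookrightarrow E(\sigma)|_Y$, $\iota_{X_g}\colon E(\sigma)|_{X_g}\hookrightarrow E(\sigma)|_Y$, both sides of the desired equality can be rewritten as $0_{E(\sigma)|_Y}^!$ applied to the pushforwards $(\iota_X)_\ast[C(s)]$ and $(\iota_{X_g})_\ast[C(s(\sigma)_g)]$ in $H_N(E(\sigma)|_Y)$. The problem thereby reduces to exhibiting these two cycles as homologous inside the total space $E(\sigma)|_Y$.

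To produce such a homology, we exploit the splitting $s = (s(\sigma)_g,\sigma_E(s))$. The cosection condition $\sigma_E\circ ds|_X = 0$ implies $d(\sigma_E(s))|_X = 0$, so $\sigma_E(s)$ vanishes to second order along $X$. Consider then the family of sections $s_t := (s(\sigma)_g,\,t\cdot \sigma_E(s))$ for $t\in[0,1]$. At $t=0$ the section $s_0 = (s(\sigma)_g,0)$ factors through $E(\sigma)\hookrightarrow E$, so $Z(s_0) = X_g$ and $C(s_0) = C(s(\sigma)_g)\subseteq E(\sigma)|_{X_g}$; at $t=1$ one recovers $s$ with $Z(s_1) = X$ and $C(s_1) = C(s)\subseteq E(\sigma)|_X$. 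Applying the deformation-to-normal-cone construction to the total section $\widetilde s\colon Y\times[0,1]\to E$, $\widetilde s(y,t) := s_t(y)$, yields a cycle $\widetilde C$ whose restrictions to $t = 0$ and $t = 1$ are $C(s(\sigma)_g)$ and $C(s)$ respectively. The cone reduction criterion applies uniformly along the family (for the fixed extension $\sigma_E$, which continues to annihilate the image of $d\widetilde s$ along $Z(\widetilde s)$), placing $\widetilde C\subseteq E(\sigma)|_{Y\times[0,1]}$. Projecting out the interval coordinate gives the required homology $(\iota_X)_\ast[C(s)] = (\iota_{X_g})_\ast[C(s(\sigma)_g)]$ in $H_N(E(\sigma)|_Y)$, whence the proposition follows.

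\textbf{Main obstacle.} The central technical point is the cone-deformation argument of the second paragraph. The zero locus $Z(s_t)$ jumps discontinuously at $t=0$, being equal to $X_g$ there and to the strictly smaller $X$ for $t>0$, so $Z(\widetilde s) = (X_g\times\{0\})\cup(X\times[0,1])$ is not a manifold with boundary in the naive sense. One must therefore verify with care that Siebert's topological normal cone construction for $\widetilde s$ does yield a genuine bordism of cycles inside $E(\sigma)|_Y$ linking the two endpoint cones, rather than producing extraneous boundary components. Once this is established, the uniform applicability of cone reduction along $s_t$ and the naturality of the refined Gysin map conclude the proof cleanly.
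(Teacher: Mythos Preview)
Your reduction is correct and matches the paper: both arguments come down to showing that the pushforwards of $[C(s)]$ and $[C(s(\sigma)_g)]$ to $H_N(E(\sigma))$ coincide, after which the refined Gysin map for $E(\sigma)$ finishes the job. Where you diverge is in how you establish that equality.

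The paper does not deform at all. It observes directly that $C(s) = C(s(\sigma)_g)$ \emph{as subsets} of $E(\sigma)$, citing the proof of the cone reduction criterion in \cite[Proposition~5.1]{SavCosection}. The intuition is exactly the one you identified --- $\sigma_E(s)$ vanishes to second order along $X$ --- but the consequence is stronger than a homology: the component $\sigma_E(s)$ contributes nothing to the limiting cone, so the topological normal cone of $s$ is literally the topological normal cone of $s(\sigma)_g$. Once that set-theoretic equality is in hand, the paper writes $h_\ast[C(s)] = h_\ast[C(s(\sigma)_g)] = [\Gamma_{s(\sigma)_g}] = [\Gamma_{s'(\sigma)_g}]$ (the second equality by the very definition of the homological normal cone, the third by small perturbation), and then identifies $0_{E(\sigma)}^![\Gamma_{s'(\sigma)_g}]$ with $(f_g')_\ast[X_g']$, which is $(f_g)_\ast[\xX_g]\virt$.

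Your deformation $s_t = (s(\sigma)_g,\, t\cdot\sigma_E(s))$ is therefore unnecessary, and the ``main obstacle'' you flag is genuine and unresolved in your proposal. The jump of $Z(s_t)$ at $t=0$ means the family $\widetilde s$ is not a clean bordism datum, and you would have to argue carefully that Siebert's cone for $\widetilde s$ really restricts to the two endpoint cones with no spurious boundary over $X_g\setminus X$. This is not impossible, but it is work you have not done, and it is work the paper avoids entirely. The cleaner route is to drop the deformation and instead prove (or cite) the identity $C(s) = C(s(\sigma)_g)$ inside $E(\sigma)$; your own observation that $d(\sigma_E(s))|_X = 0$ is precisely the input needed for that.
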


\begin{proof}
By~\eqref{eq:reduced J-class} and~\eqref{eq:reduced BF-class}, we need to show that 
\begin{align*}
(f_g)_\ast [\xX_g]\virt = f_\ast 0_{E(\sigma)|_X}^! [C(s)] \in H_{n+1}(Y).
\end{align*}

Let $s'$ be a small perturbation of $s$ which intersects the zero section $Y$ of $E$ transversely. In particular, this implies that $s'(\sigma)_g$ is a small perturbation of $s(\sigma)_g$, which intersects the zero section of $E(\sigma)$ transversely.

Write $X'$ and $X_g'$ for the zero loci of $s'$ and $s'(\sigma)_g$ respectively. Let $f' \colon X' \to Y$ and $f'_g \colon X'_g \to Y$ be the inclusion maps.

By definition, we then have that 
\begin{align} \label{eq:loc 2.8}
(f_g)_\ast [\xX_g]\virt = (f'_g)_\ast [X'_g] \in H_{n+1}(Y).
\end{align}

By the cartesian square
\begin{align*}
\xymatrix{
E(\sigma)|_X \ar[d] \ar[r]^-{h} & E(\sigma) \ar[d] \\
X \ar[r]_-{f} & Y
}
\end{align*} 
we have $f_\ast 0_{E(\sigma)|_X}^! = 0_E^! h_\ast$ and hence 
\begin{align} \label{eq:loc 2.10}
f_\ast 0_{E(\sigma)|_X}^! [C(s)] = 0_{E(\sigma)}^! h_\ast [C(s)].
\end{align}

It is straightforward from the proof of the cone reduction criterion \cite[Proposition~5.1]{SavCosection} that $C(s) = C(s(\sigma)_g)$ as subsets of $E(\sigma)$. By the definition of the homological normal cone, it then follows that 
\begin{align} \label{eq:loc 2.11}
h_\ast[C(s)] = h_\ast [C(s(\sigma)_g)] = [\Gamma_{s(\sigma)_g}] = [\Gamma_{s'(\sigma)_g}] \in H_N(E(\sigma)),
\end{align}
where the last equality is due to the fact that $s'(\sigma)_g$ is a small perturbation of $s(\sigma)_g$.

Since $(f_g')_\ast [X_g'] = 0_{E(\sigma)}^! [\Gamma_{s'(\sigma)_g}]$, combining~\eqref{eq:loc 2.8}, \eqref{eq:loc 2.10} and \eqref{eq:loc 2.11} finishes the proof.
\end{proof}

It follows from the above proposition that $[\xX]\virt_{\mathrm{red},J}$ does not depend on the choice of metric $g$, since $[\xX]\virt_{\mathrm{red},BF}$ does not involve the metric. From now on, we write $[\xX]\virt_{\mathrm{red}}$ for the reduced virtual fundamental class of $\xX$. To show that it is well-defined, we thus only need to check that it is independent of the principal d-manifold model $\sS_{Y,E,s}$ we have used for $\xX$.

\begin{prop}
$[\xX]\virt_{\mathrm{red}}$ is independent of the choice of presentation $\xX \simeq \sS_{Y,E,s}$ and thus well-defined.
\end{prop}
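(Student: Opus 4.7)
My plan is to work with the J-version of the reduced class, $[\xX]\virt_{\mathrm{red}} = (f_\ast)^{-1}(f_g)_\ast[\xX_g]\virt$, and reduce the well-definedness to Joyce's uniqueness results for principal d-manifold presentations. The preceding remark already handles metric-independence via linear interpolation, so the only remaining issue is independence of the choice of presentation $\xX \simeq \sS_{Y,E,s}$.

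First, I would observe that the cosection $\sigma \colon \Ob_{\xX} \to \bR_X$ is intrinsic to $\xX$, so for any presentation $(Y,E,s)$, the induced bundle surjection $\sigma_E \colon E \to \bR_Y$ (after possibly shrinking $Y$ around $X$) is pinned down up to contractible choice by the factorization $E|_{X_{\Cinf}} \to \Ob_\xX \xrightarrow{\sigma} \bR_X$. Consequently the kernel $E(\sigma)$ is well-defined up to canonical isomorphism near $X$, and carries a canonical orientation induced from $E$.

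Second, I would appeal to \cite[Theorems~4.29 and 4.34]{JoyceDMan}, which imply that any two principal presentations of a compact oriented d-manifold $\xX$ are connected by a zigzag of elementary stabilizations. An elementary stabilization replaces $(Y,E,s)$ by
\[
(Y',E',s') \defeq (Y \times U,\ E \oplus p_Y^\ast F,\ (s,\tau)),
\]
where $U$ is a small open neighborhood of $0 \in \bR^k$, $F$ is the trivial rank-$k$ bundle, and $\tau$ is a section vanishing transversally along $Y \times \{0\}$. Under such a stabilization the cosection pulls back to $\sigma_{E'} = \sigma_E \oplus 0$, the kernel bundle becomes $E'(\sigma') = E(\sigma) \oplus p_Y^\ast F$, and the orientations match by construction. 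Choosing a metric $g' = g \oplus g_F$ with $g_F$ the standard metric on $F$, the reduced section in the stabilized setting is $s'(\sigma')_{g'} = (s(\sigma)_g,\tau)$. Hence
\[
\xX'_{g'} \;=\; \sS_{Y',\,E'(\sigma'),\,s'(\sigma')_{g'}} \;=\; \sS_{Y \times U,\,E(\sigma) \oplus p_Y^\ast F,\,(s(\sigma)_g,\tau)},
\]
which is precisely an elementary stabilization of $\xX_g = \sS_{Y,E(\sigma),s(\sigma)_g}$. Therefore $\xX'_{g'}$ and $\xX_g$ are equivalent (hence bordant) compact oriented d-manifolds of virtual dimension $n+1$, and their virtual fundamental classes agree in $H_{n+1}(X)$ after the canonical identification $X'_{g'} \simeq X_g$.

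The main subtlety will be confirming that the stabilization of the reduced presentation is genuinely an \emph{elementary} d-manifold stabilization in Joyce's sense — in particular that $\tau$ remains a transverse section on $E(\sigma) \oplus p_Y^\ast F$ (which is immediate from transversality in the $F$-summand) and that the relevant compatibility with orientations is preserved. Once this functoriality of stabilization is verified for the reduced construction, invariance under arbitrary change of presentation follows automatically from Joyce's zigzag result, so $[\xX]\virt_{\mathrm{red}}$ depends only on $(\xX, \sigma)$, concluding the proof.
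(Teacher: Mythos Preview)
Your approach has a genuine gap at the key reduction step. You claim, citing \cite[Theorems~4.29, 4.34]{JoyceDMan}, that any two principal presentations of a compact oriented d-manifold are connected by a zigzag of elementary stabilizations of the specific product form $(Y \times U,\, E \oplus p_Y^\ast F,\, (s,\tau))$ with $\tau$ transverse along $Y \times \{0\}$. Those theorems are existence results for a single global presentation; they say nothing about how two presentations compare. What Joyce's theory (and the argument referenced from \cite[Proposition~5.2]{SavCosection}) actually gives is the more general \emph{standard form}: a submersion $h \colon Y_1 \to Y_2$ together with a surjection $\hat h \colon E_1 \to h^\ast E_2$ satisfying $\hat h \circ s_1 = h^\ast s_2$ and inducing a fibrewise quasi-isomorphism of tangent complexes over $X$. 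This is only \emph{locally} of product type, not globally, so your elementary-stabilization calculus does not apply without further work.

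The paper's proof sidesteps this difficulty by switching to the BF-version $[\xX]\virt_{\mathrm{red},BF} = 0_{E(\sigma)|_X}^{!}[C(s)]$ (already shown equal to the J-version) and arguing directly in the submersion standard form. The point is that the fibrewise quasi-isomorphism forces $\hat h$ to restrict to a surjection $E_1(\sigma) \to E_2(\sigma)$ of the kernel bundles, after which a choice of splitting identifies $C(s_1)$ with $C(s_2) \oplus E_3|_X$ inside $E_1(\sigma)|_X \cong E_2(\sigma)|_X \oplus E_3|_X$, and the Gysin maps visibly agree. If you want to stay with the J-version, you would need to run an analogous argument for the reduced d-manifolds $\xX_g$ under a general submersion comparison, not just a product one; your current write-up does not do this.
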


\begin{proof}
Suppose that $\xX \simeq \sS_{Y_1, E_1, s_1} \simeq \sS_{Y_2, E_2, s_2}$. We work with the BF-virtual fundamental class in what follows.

Following the argument in the proof of \cite[Proposition~5.2]{SavCosection}, we may reduce to the case of an equivalence of the following standard form: There is a submersion $h \colon Y_1 \to Y_2$, a surjection of vector bundles $\hat{h} \colon E_1 \to h^\ast E_2$ such that $\hat{h} \circ s_1 = h^\ast s_2$ and a commutative diagram
\begin{align*}
\xymatrix{
T_{Y_1}|_X \ar[d]_-{\dd h} \ar[r]^-{\dd s_1} & E_1|_X \ar[d]^-{\hat{h}} \ar[dr] \\
h^\ast T_{Y_2}|_X  \ar[r]_-{\dd (h^\ast s_2)} & E_2|_X \ar[r] & \Ob_{\xX} \ar[r]_-{\sigma} & \bR_X,
}
\end{align*}
where the morphism of tangent complexes is a quasi-isomorphism on all fibers over points $x \in X$.

Since both derivatives $\dd s_1$ and $\dd s_2$ factor through the kernel bundles $E_1(\sigma)$ and $E_2(\sigma)$ we get an induced quasi-isomorphism (on fibers over $x \in X$) of two-term complexes
\begin{align*}
\xymatrix{
T_{Y_1}|_X \ar[d]_-{\dd h} \ar[r]^-{\dd s_1} & E_1(\sigma)|_X \ar[d]^-{\hat{h}} \\
h^\ast T_{Y_2}|_X  \ar[r]^-{\dd (h^\ast s_2)} & E_2(\sigma)|_X.
}
\end{align*}

It follows that the projection $\hat{h}$ induces a submersion $\hat{h}(\sigma) \colon E_1(\sigma) \to E_2(\sigma) $, which makes $E_1(\sigma)$ into a vector bundle over $E_2(\sigma)$. We pick a splitting of $\hat{h}$ so that $E_1(\sigma) \cong E_2(\sigma) \oplus E_3$, for some vector bundle $E_3$ on $Y$.

Now, $\hat{h}$ and hence $\hat{h}(\sigma)$ map $C(s_1)$ onto $C(s_2)$ such that $C(s_1)$ is a vector bundle over $C(s_2)$ and under the chosen splitting we actually have $C(s_1) \cong C(s_2) \oplus E_3|_X$.

Therefore, we conclude that
\begin{align*}
0_{E_1(\sigma)|_X}^! [C(s_1)] = 0_{E_2(\sigma)|_X \oplus E_3|_X}^! [C(s_2)] \oplus [E_3|_X] = 0_{E_2(\sigma)|_X}^! [C(s_2)],
\end{align*}
which is what we want.
\end{proof}

We may summarize as follows.

\begin{thm} \label{thm:reduced class for d-manifold}
Let $\xX$ be a compact, oriented d-manifold of virtual dimension $n$ with underlying topological space $X$ and $\sigma \colon \Ob_\xX \to \bR_X$ a surjective real cosection. Then there exists an induced, well-defined reduced virtual fundamental class $[\xX]\virt_{\mathrm{red}} \in H_{n+1}(X,\bZ)$.
\end{thm}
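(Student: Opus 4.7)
The plan is to produce two constructions of the reduced class and to prove that they coincide; this automatically shows independence of all auxiliary data, since each construction is sensitive to different choices. Throughout I would fix a global presentation $\xX \simeq \sS_{Y,E,s}$ provided by Theorem-Definition~\ref{thm-defi:global presentation of d-manifold}, with $Y \subseteq \bR^N$ smooth and retracting onto $X$, and then translate the cosection $\sigma \colon \Ob_\xX \to \bR_X$ into bundle-level data. Since $\Ob_\xX$ is the cokernel of $\dd s$ on $X_{\Cinf}$, composing with the projection $E|_{X_{\Cinf}} \to \Ob_\xX$ and using surjectivity of $\sigma$, I get a surjection $\sigma_E \colon E|_{X_{\Cinf}} \to \bR_X$, which by compactness of $X$ and after shrinking $Y$ extends to a surjection of smooth bundles $\sigma_E \colon E \to \bR_Y$. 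Let $E(\sigma) := \ker \sigma_E$, which inherits a natural orientation from that of $E$.

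\textbf{First construction (reduced d-manifold).} A choice of metric $g$ on $E$ produces a splitting $E \cong E(\sigma) \oplus \bR_Y$, and writing $s = (s(\sigma)_g, s_{\mathrm{tr}})$ for this decomposition, I form the principal d-manifold $\xX_g := \sS_{Y, E(\sigma), s(\sigma)_g}$ of virtual dimension $n+1$. A boundary/bounded-extension argument (as in the excerpt) shows that after possibly shrinking $Y$ the zero locus of $s(\sigma)_g$ stays away from $\partial \overline{Y}$, so $\xX_g$ is compact and oriented. I then set
\[
[\xX]\virt_{\mathrm{red},J} := (f_\ast)^{-1}(f_g)_\ast [\xX_g]\virt \in H_{n+1}(X).
\]

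\textbf{Second construction (cone reduction).} Invoking the smooth cone reduction criterion \cite[Proposition~4.1]{SavCosection}, the normal cone $C(s) \subseteq E|_X$ lies inside $E(\sigma)|_X$. I define
\[
[\xX]\virt_{\mathrm{red},BF} := 0_{E(\sigma)|_X}^! [C(s)] \in H_{n+1}(X),
\]
using the homological Gysin pullback along the zero section of the oriented bundle $E(\sigma)|_X$.

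\textbf{Comparison and independence.} To prove these classes coincide, I perturb $s$ generically to $s'$ transverse to the zero section; this makes $s'(\sigma)_g$ transverse as well, so $(f_g)_\ast [\xX_g]\virt$ is represented by the transverse zero locus $[X'_g]$. Pushing forward through $f$ and using the cartesian square relating $E(\sigma)|_X$ to $E(\sigma)$ gives $f_\ast 0_{E(\sigma)|_X}^![C(s)] = 0_{E(\sigma)}^! h_\ast [C(s)]$, while the definition of the homological normal cone (and the identification $C(s) = C(s(\sigma)_g)$ coming from the cone reduction criterion) identifies $h_\ast[C(s)]$ with $[\Gamma_{s(\sigma)_g}] = [\Gamma_{s'(\sigma)_g}]$. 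Equality of the two classes follows. Independence of $g$ is then automatic on the cone side; alternatively, a linear interpolation $g_t = (1-t)g_1 + tg_2$ between metrics provides an explicit bordism of the associated d-manifolds $\xX_{g_1}$ and $\xX_{g_2}$, and d-manifold virtual classes are bordism invariant. Independence of the chosen presentation $\sS_{Y,E,s}$ reduces, by the standard-form argument of \cite[Proposition~5.2]{SavCosection}, to comparing two presentations linked by a submersion $h\colon Y_1 \to Y_2$ and a surjection $\hat h \colon E_1 \to h^\ast E_2$; restricting to kernel bundles produces a compatible map $\hat h(\sigma) \colon E_1(\sigma) \to h^\ast E_2(\sigma)$ identifying $C(s_1)$ with a bundle over $C(s_2)$, and the Gysin compatibility $0_{E_1(\sigma)|_X}^! [C(s_1)] = 0_{E_2(\sigma)|_X}^![C(s_2)]$ follows.

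\textbf{Main obstacle.} The delicate step is the cone-level compatibility: one needs that $C(s)$, an a priori mysterious topological cone, not only sits inside $E(\sigma)|_X$ but in fact equals $C(s(\sigma)_g)$ as subsets of $E(\sigma)$, so that the homological fundamental class $[C(s)]$ transports correctly between the two constructions. This rests on the explicit proof of the smooth cone reduction criterion, rather than merely its statement, and is also what underlies the invariance under changes of presentation, since it lets one match cones across the bundle surjection $\hat h(\sigma)$ without the ambient metric appearing.
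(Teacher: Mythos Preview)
Your proposal is correct and follows essentially the same approach as the paper: define the reduced class in two ways (via the reduced d-manifold $\xX_g$ and via the Gysin map applied to $[C(s)] \subseteq E(\sigma)|_X$), prove they agree using a transverse perturbation together with the identification $C(s) = C(s(\sigma)_g)$ from the proof of the cone reduction criterion, and then deduce independence of the metric and of the presentation exactly as you describe. Your identification of the key technical point---that one needs the equality $C(s) = C(s(\sigma)_g)$ as sets, not merely the containment $C(s) \subseteq E(\sigma)|_X$---is also precisely where the paper leans on the \emph{proof} rather than the statement of \cite[Proposition~4.1]{SavCosection}.
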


\subsection{The case of multiple cosections} \label{subsection: red class for mult cosection d-manifold} We finally note that we can define reduced virtual fundamental classes in the presence of multiple cosections. 

Namely, let $\underline{\sigma} \colon \Ob_{\xX} \to \bR_X^k$ be a surjective morphism of sheaves on $X_{\Cinf}$. Write $E(\underline{\sigma})$ for the kernel of the surjection $E \to \Ob_{\xX} \xrightarrow{\ \underline{\sigma} \ } \bR_X^k$, which is a vector bundle with a naturally induced orientation. Then, we may define a reduced virtual fundamental class $[\xX]_{\mathrm{red}}\virt \in H_{n+k}(X)$, exactly as before, in two ways:
\begin{enumerate}
\item After picking a metric $g$ on $E$, we have an isomorphism $E \cong E(\underline{\sigma}) \oplus \bR_X^k$. Let $s(\underline{\sigma})_g$ be the first component of $s$ under this isomorphism. Hence, we obtain a compact, oriented d-manifold $\xX_g = \sS_{Y, E(\underline{\sigma}),s(\underline{\sigma})_g}$ of virtual dimension $n+k$ with underlying topological space $X_g$ and we define $$[\xX]\virt_{\mathrm{red}, J} := (f_\ast)^{-1} (f_g)_\ast [\xX_g]\virt \in H_{n+k}(X),$$ where $f_g \colon X_g \to Y$ denotes the inclusion map.

\item A straightforward generalization of the cone reduction criterion \cite[Proposition~4.1]{SavCosection} shows that the normal cone $C(s)$ is contained in $E(\underline{\sigma})$. We can thus define
$$[\xX]\virt_{\mathrm{red}, BF} := 0_{E(\underline{\sigma})|_X}^! [C(s)] \in H_{n+k}(X).$$
\end{enumerate}

All the previous results immediately generalize in this setting so that $[\xX]\virt_{\mathrm{red}, J} = [\xX]\virt_{\mathrm{red}, BF}$ and the reduced virtual fundamental class is independent of the choice of presentation $\xX \simeq \sS_{E,Y,s}$ and metric $g$.


\section{Cosection localization and reduced virtual fundamental classes for $(-2)$-shifted symplectic derived schemes}

Let $(\undxX, \omega_{\undxX})$ be a proper, oriented, $(-2)$-shifted derived scheme with underlying classical scheme (and, by abuse of notation, topological space) $X$, equipped with a meromorphic cosection, i.e. a morphism $\sigma \colon \bT_{\undxX}|_X[1] \dashrightarrow \oO_X$ defined on an open subscheme $\underline{\uU}(\sigma) \subseteq \undxX$ with underlying classical scheme (and topological space) $U(\sigma)$. 

Write $\xX_{dm}, \uU(\sigma)_{dm}$ for the associated d-manifolds to the $(-2)$-shifted symplectic schemes $(\undxX, \omega_{\undxX})$ and $(\underline{\uU}(\sigma), \omega_{\undxX}|_{\underline{\uU}(\sigma)})$ respectively. In \cite{SavCosection}, we gave non-degeneracy conditions under which $\sigma$ induces a surjective real cosection on $\underline{\uU}_{dm}$. This in turn allowed us to localize the virtual fundamental class $[\xX_{dm}]\virt$ to the degeneracy locus $X(\sigma) = X \setminus U(\sigma)$ by applying the methods of \cite[Appendix~A]{KiemLiCosection}. 

In this section, we use the machinery of complex Kuranishi structures introduced recently by Oh--Thomas \cite{OhThomasII} to establish a similar result for \emph{any} cosection. The global complex Kuranishi charts constructed by Oh--Thomas allow us to bypass any subtleties arising from possible different choices of local Darboux charts for $(\undxX, \omega_{\undxX})$ and simplify the arguments of \cite{SavCosection}.

In the algebraic setting, cosection localization for $(-2)$-shifted symplectic schemes with isotropic cosections was established by Kiem--Park in \cite{KiemPark}. Thus our results match the expectation that there should be a differential geometric analogue. Conversely, the case of non-degenerate cosections was subsequently treated algebraically in \cite{BaeKoolParkI}. Hence, we now have a complete picture from both points of view. We will see in the next section that all definitions and different constructions are consistent with each other.
\medskip

In addition to the above, we assume that $X$ is projective. Take a global orthogonal, complex Kuranishi chart $(Y,E,s)$ of $X$, satisfying the properties (1)-(4) listed in Subsection~\ref{subsection: BJ and OT classes}. In particular, truncate this chart to obtain a principal d-manifold $\sS_{Y, E^+, s^+}$, which is a valid choice of model for the compact, oriented d-manifold $\xX_{dm}$ associated to $(\undxX, \omega_{\undxX})$.

For a meromorphic cosection $\sigma$ on a $(-2)$-shifted symplectic scheme $(\undxX, \omega_{\undxX})$, we say that $\sigma$ is \emph{isotropic} if the induced quadratic form $q_x$ on $\Ob({\undxX},x)^\vee$ is zero on the image of $\sigma_x^\vee$ for all $x \in U(\sigma)$. Equivalently, the composition 
\begin{align} \label{eq:loc 3.3}
\oO_{U(\sigma)} \xrightarrow{\sigma^\vee} \bL_{\underline{\uU}(\sigma)}|_{U(\sigma)}[-1] \xrightarrow{\ \omega_{\undxX}|_{U(\sigma)}\ } \bT_{\underline{\uU}(\sigma)}|_{U(\sigma)}[1] \xrightarrow{\ \sigma\ } \oO_{U(\sigma)}
\end{align}
vanishes.

On the other hand, if the composition~\eqref{eq:loc 3.3} is non-zero for all $x \in U(\sigma)$, we say that $\sigma$ is \emph{non-degenerate}.

\begin{thm} \label{thm:surjective cosection on Xdm}
Let $(\undxX, \omega_{\undxX})$ be an oriented $(-2)$-shifted derived scheme with a regular, i.e. globally defined, surjective cosection $\sigma$. Suppose that we have a global orthogonal, complex Kuranishi chart $(Y,E,s)$ for $X$. Then, if $\sigma$ is isotropic, it induces a surjective complex cosection $\sigma^+$ on the d-manifold $\xX_{dm} = \sS_{Y,E^+,s^+}$. If $\sigma$ is non-degenerate, it induces a surjective real cosection $\sigma^+$ on $\xX_{dm}$.
\end{thm}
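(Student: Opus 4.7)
The plan is to work in the global complex Kuranishi chart $(Y,E,s)$ of Oh--Thomas and construct $\sigma^+$ by restricting a canonical representative of $\sigma$ to $E^+|_X$. By property (4) of the chart, $\bT_{\undxX}|_X[1]$ is modelled on $X$ by the holomorphic three-term complex $[T_Y|_X \xrightarrow{\dd s} E|_X \xrightarrow{\dd s^\vee} \Omega_Y|_X]$, so that $\sigma$ corresponds to a $\bC$-linear $\tilde\sigma\colon E|_X \to \oO_X$ with $\tilde\sigma\circ\dd s=0$; equivalently, via $q\colon E\cong E^\vee$, to a section $\tilde\sigma^\vee \in \ker\dd s^\vee|_X$. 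Surjectivity of $\sigma$ forces $\tilde\sigma^\vee$ to be nowhere vanishing in $\Ob_{\undxX} = \ker\dd s^\vee/\im\dd s$, while the isotropic and non-degenerate hypotheses translate respectively to $q(\tilde\sigma^\vee)=0$ and $q(\tilde\sigma^\vee)\in\oO_X^\times$.

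Next, I fix the canonical representative: within its $\im\dd s|_X$-coset, take $\tilde\sigma^\vee$ to be $h$-orthogonal to $\im\dd s|_X$, where $h(e,f) := q(e,\bar f)$ is the positive-definite Hermitian metric on $E|_X$ induced by the splitting and $\mathrm{Re}(q)|_{E^+}>0$. Define $\sigma^+(e) := \tilde\sigma(e) = q(\tilde\sigma^\vee,e)$ for $e \in E^+|_X$. In the isotropic case, decomposing $\tilde\sigma^\vee = A + iB$ with $A,B \in E^+|_X$ gives the $\bC_X$-valued map $\sigma^+(e) = q(A,e) + i\,q(B,e)$, a complex cosection candidate. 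In the non-degenerate case, a normalization using the orientation of $\undxX$ and the non-vanishing of $q(\tilde\sigma^\vee)$ brings $\tilde\sigma^\vee$ into $E^+|_X$ (after a $q$-isometric rotation), making $\sigma^+$ real-valued.

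The main technical step, and the main obstacle, is to verify that $\sigma^+$ descends to $\Ob_{\xX_{dm}} = \coker\dd s^+$ and is surjective. For descent, I would use two identities valid for every $v \in T_Y^\bR|_X$: $q(\tilde\sigma^\vee, \dd s(v)) = 0$ (from $\tilde\sigma\circ\dd s=0$) and $q(\tilde\sigma^\vee, \overline{\dd s(v)}) = h(\tilde\sigma^\vee, \dd s(v)) = 0$ (from $h$-orthogonality). Under the real structure on $E = E^+ \otimes_\bR \bC$, complex conjugation satisfies $\overline{e^+ + e^-} = e^+ - e^-$, so $\overline{\dd s(v)} = \dd s^+(v) - \dd s^-(v)$; adding the two identities gives $q(\tilde\sigma^\vee, \dd s^+(v)) = 0$, hence $\sigma^+ \circ \dd s^+ = 0$. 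For surjectivity: in the isotropic case, $q(A) = q(B) > 0$ and $q(A,B) = 0$ together with positive-definiteness of $q|_{E^+}$ force $A$ and $B$ to be $\bR$-linearly independent wherever $\tilde\sigma^\vee\neq 0$, so $(q(A,-), q(B,-))$ surjects onto $\bR^2 = \bC$; in the non-degenerate case, the normalized $\tilde\sigma^\vee \in E^+|_X$ is nowhere zero, so $q(\tilde\sigma^\vee,-)$ surjects onto $\bR_X$. The isotropic/non-degenerate dichotomy in the conclusion reflects the fact that isotropy supplies two independent real combinations that descend, yielding a complex cosection, while non-degeneracy provides only one, yielding a real cosection.
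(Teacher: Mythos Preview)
Your approach is genuinely different from the paper's: the paper simply restricts the given representative $\sigma\colon E|_X\to\oO_X$ to $E^+|_X$ (composing with a projection $\bC_X\to\bR_X$ in the non-degenerate case) and focuses entirely on surjectivity, whereas you first replace $\tilde\sigma^\vee$ by its $h$-orthogonal representative in order to prove descent to $\Ob_{\xX_{dm}}$. Your attention to descent is warranted---it is not as ``evident'' as the paper suggests, since $\sigma\circ\dd s=0$ only gives $\sigma(\dd s^+(v))=-\sigma(\dd s^-(v))$, not zero---and your computation $q(\tilde\sigma^\vee,\dd s(v))+q(\tilde\sigma^\vee,\overline{\dd s(v)})=2q(\tilde\sigma^\vee,\dd s^+(v))$ is a clean way to close this. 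Your isotropic surjectivity argument ($q(A)=q(B)>0$, $q(A,B)=0$ forcing $A,B$ independent) is correct and equivalent to the paper's contrapositive via $E^-$.

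However, there are two gaps. First, the $h$-orthogonal representative need not vary continuously over $X$: the projection onto $\im\dd s|_X$ is discontinuous wherever the rank of $\dd s$ jumps, so your $\sigma^+$ may fail to be a morphism of sheaves on $X_{\Cinf}$. You would need to produce a \emph{smooth} section $h^\vee$ of $T_Y|_X$ with $\tilde\sigma^\vee-\dd s(h^\vee)\perp_h\im\dd s$, and this is not automatic. Second, in the non-degenerate case your ``$q$-isometric rotation'' bringing $\tilde\sigma^\vee=A+iB$ into $E^+$ does not exist in general: any complex scalar multiple $\lambda(A+iB)$ lies in $E^+$ only when $A$ and $B$ are $\bR$-parallel, and non-degeneracy $q(A)-q(B)+2iq(A,B)\neq0$ does not imply this. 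The paper's route here is different: it keeps $\sigma|_{E^+}\colon E^+\to\bC$ and then chooses a projection $\bC_X\to\bR_X$ whose kernel avoids the image, rather than trying to make $\sigma|_{E^+}$ land in $\bR$.
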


\begin{proof}
Without loss of generality, by a standard argument (cf. \cite[Lemma~7.4]{KiemPark}) we may assume that $\sigma$ is represented by an honest morphism of complexes
\begin{align}
\xymatrix{
T_Y|_X \ar[r]^-{\dd s} \ar[d] & E|_X \simeq E|_X^\vee \ar[r]^-{\dd s^\vee} \ar[d]^-{\sigma} &\Omega_Y|_X \ar[d] \\
0 \ar[r] & \oO_X \ar[r] & 0.
}
\end{align}
It follows that the map $\sigma\colon E|_X \to \oO_X$ must be surjective.

In the non-degenerate case, $\sigma^+$ is defined via the composition $E^+|_X \subseteq E|_X \xrightarrow{ \ \sigma\ } \bC_X \to \bR_X$, where the last surjection is to be determined shortly. It is evident that this induces a well-defined real cosection on $\xX_{dm}$. 

If $\sigma_x \colon E|_x \to \bC_X|_x = \bC$ satisfies that $\sigma_x^\vee$ defines a non-degenerate line in $E|_x^\vee$ with respect to the quadratic form $q|_x$, then, since $\sigma_x$ is $\bC$-linear, it is the complexification of a surjective map $\sigma^+_x \colon E^+|_x \to \bR$. We thus have a projection $\bC_X \to \bR_X$ defined on the locus of $x \in X$ where $\sigma_x$ is non-degenerate, such that the induced composition $E^+|_x \to \bR$ is surjective. 

On the other hand, if $\sigma_x \colon E|_x \to \bC_X|_x = \bC$ satisfies that $\sigma_x^\vee$ defines an isotropic line in $E|_x^\vee$ with respect to the quadratic form $q|_x$, then $E^+|_x \subseteq E|_x \to \bC$ is surjective. 
If not, then $\im(\sigma_x^\vee) \cap (E^-|_x)^\vee$ would be non-empty, which is impossible. This concludes the proof.
\end{proof}

As a corollary, we may now generalize \cite[Theorem~5.8]{SavCosection}. 

\begin{thm} \label{thm:cosection localization for derived schemes any cosection}
Let $(\undxX, \omega_{\undxX})$ be a projective, oriented $(-2)$-shifted symplectic derived scheme of virtual dimension $n = \rk \bL_{\undxX}|_X$ with a meromorphic, surjective cosection $\sigma$, which is isotropic or non-degenerate. Then there exists a cosection localized virtual fundamental class $[\xX_{dm}]\virt_{\loc,\sigma} \in H_n(X(\sigma),\bZ)$ satisfying
$$ i_\ast [\xX_{dm}]\virt_{\loc,\sigma} = [\xX_{dm}]\virt \in H_n(X, \bZ),$$
where $i\colon X(\sigma) \to X$ is the inclusion map.
\end{thm}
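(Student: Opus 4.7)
The plan is to reduce the problem to the global orthogonal, complex Kuranishi chart of Oh--Thomas recalled in Subsection~\ref{subsection: BJ and OT classes}, use Theorem~\ref{thm:surjective cosection on Xdm} to transfer the cosection to the associated d-manifold, and then run the Kiem--Li localized Gysin construction \cite[Appendix~A]{KiemLiCosection}, as adapted to the $\Cinf$-setting in \cite[Section~5]{SavCosection}.

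First, I would fix a global orthogonal, complex Kuranishi chart $(Y,E,s)$ for $X$ with the properties listed in Subsection~\ref{subsection: BJ and OT classes}, so that $\xX_{dm} \simeq \sS_{Y,E^+,s^+}$ is a valid principal d-manifold model and $[\xX_{dm}]\virt = 0^!_{E^+|_X}[C(s^+)]$. Restricting this chart to $U(\sigma)\subseteq X$ yields a global orthogonal, complex Kuranishi chart for $\uU(\sigma)_{dm}$, to which Theorem~\ref{thm:surjective cosection on Xdm} applies, producing a surjective real cosection $\sigma^+$ on $\uU(\sigma)_{dm}$ in the non-degenerate case and a surjective complex cosection $\sigma^+$ in the isotropic case; the latter can be viewed as a surjective rank two real cosection in the sense of Subsection~\ref{subsection: red class for mult cosection d-manifold}.

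Next, writing $E^+(\sigma) \subseteq E^+|_{U(\sigma)}$ for the kernel bundle of $\sigma^+$, the smooth cone reduction criterion \cite[Proposition~4.1]{SavCosection} gives $C(s^+)|_{U(\sigma)} \subseteq E^+(\sigma)$. Since $E^+(\sigma)$ sits inside $E^+|_{U(\sigma)}$ with positive corank, the standard zero-section intersection vanishes on $U(\sigma)$ and the Kiem--Li localized Gysin map is well defined. I would then set
\[ [\xX_{dm}]\virt_{\loc,\sigma} \defeq 0^!_{E^+|_X,\loc\, \sigma^+}[C(s^+)] \in H_n(X(\sigma),\bZ). \]
The identity $i_\ast [\xX_{dm}]\virt_{\loc,\sigma} = [\xX_{dm}]\virt$ is then a formal property of the Kiem--Li construction: composition with $i_\ast$ recovers the ordinary Gysin intersection $0^!_{E^+|_X}[C(s^+)]$.

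Finally, I would verify independence of the choice of global Kuranishi chart by the same refinement-style argument as in Section~3 above and in \cite[Section~5]{SavCosection}, reducing any two such charts to a comparison via a submersion $Y_1 \to Y_2$ and using functoriality of the localized Gysin construction under this operation. The main obstacle I anticipate is transplanting the Kiem--Li construction from its original algebraic setting to the $\Cinf$-context while uniformly handling the complex (isotropic) case as a rank two real cosection alongside the non-degenerate case; however, since the global complex Kuranishi chart of \cite{OhThomasII} is well adapted to both frameworks and reduces everything to homological intersection theory on smooth ambient bundles, these should amount to essentially formal manipulations following \cite[Section~5]{SavCosection}.
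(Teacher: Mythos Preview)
Your proposal is correct and follows essentially the same route as the paper: apply Theorem~\ref{thm:surjective cosection on Xdm} over $U(\sigma)$ to obtain a surjective real (or complex, i.e. rank two real) cosection $\sigma^+$ on the open sub-d-manifold, then invoke the Kiem--Li localized Gysin construction as adapted in \cite[Section~5]{SavCosection}. The paper's proof is simply terser, citing \cite[Theorem~5.8, Theorem-Definition~5.3, Remark~5.4]{SavCosection} directly rather than unpacking the cone reduction and localized Gysin steps you spell out; your final paragraph on independence of the chart is already absorbed into those cited results.
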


\begin{proof}
By Theorem~\ref{thm:surjective cosection on Xdm}, $\sigma$ induces a surjective cosection $\sigma^+$ on the open sub-d-manifold of $\xX_{dm}$ with underlying topological space $U(\sigma)$. The proof of \cite[Theorem~5.8]{SavCosection} then applies verbatim to show the statement, using \cite[Theorem-Definition~5.3, Remark~5.4]{SavCosection}.
\end{proof}

\begin{rmk}
When $X$ is proper and $\sigma$ is regular, i.e. defined everywhere on $X$, then $\sigma$ is either non-degenerate or isotropic depending on whether the morphism~\eqref{eq:loc 3.3} is non-zero or vanishes respectively. These are the two most common cases in the literature. The theorem above allows for the slightly greater flexibility of having a meromorphic cosection $\sigma$.
\end{rmk}

Theorem~\ref{thm:surjective cosection on Xdm}  and its proof immediately imply the following statement on reduced classes.

\begin{thm} \label{thm:reduced for shifted symplectic}
Let $(\undxX, \omega_{\undxX})$ be a projective, oriented $(-2)$-shifted symplectic derived scheme of virtual dimension $n$ with a regular, everywhere surjective cosection $\sigma$. Then:
\begin{enumerate}
\item If $\sigma$ is non-degenerate, it induces a surjective real cosection on $[\xX_{dm}]$. Hence, there is a reduced virtual fundamental class $[\xX_{dm}]\virt_{\mathrm{red}} \in H_{n+1}(X,\bZ)$.
\item If $\sigma$ is isotropic, the morphism $E^+ \subseteq E \xrightarrow{\sigma} \bC_X$ induces a surjective complex cosection $\sigma^+ \colon \Ob_{\xX_{dm}} \to \bC_X = \bR_X^2$ on $[\xX_{dm}]$. Hence, there is a reduced virtual fundamental class $[\xX_{dm}]\virt_{\mathrm{red}} \in H_{n+2}(X,\bZ)$.
\end{enumerate}
\end{thm}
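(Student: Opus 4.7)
The plan is to obtain the reduced virtual fundamental classes directly from Theorem~\ref{thm:surjective cosection on Xdm} combined with the reduced class machinery of Section~3, applied to the Borisov--Joyce d-manifold $\xX_{dm}$. The starting point will be the global orthogonal, complex Kuranishi chart $(Y, E, s)$ for $X$ provided by \cite{OhThomasII}, so that $\xX_{dm} \simeq \sS_{Y, E^+, s^+}$ is presented as a principal, compact, oriented d-manifold of virtual dimension $n$, with orientation inherited from that of $E^+$.

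For part~(1), when $\sigma$ is non-degenerate, I would invoke Theorem~\ref{thm:surjective cosection on Xdm} to produce an induced surjective real cosection $\sigma^+ \colon \Ob_{\xX_{dm}} \to \bR_X$, obtained pointwise from $E^+|_x \hookrightarrow E|_x \xrightarrow{\sigma} \bC$ followed by the real projection selected by the non-degeneracy of $\sigma_x^\vee$ with respect to $q|_x$. Then Theorem~\ref{thm:reduced class for d-manifold} applies verbatim to $(\xX_{dm}, \sigma^+)$, delivering a well-defined reduced class $[\xX_{dm}]\virt_{\mathrm{red}} \in H_{n+1}(X, \bZ)$ and recording the vanishing $[\xX_{dm}]\virt = 0$.

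For part~(2), when $\sigma$ is isotropic, Theorem~\ref{thm:surjective cosection on Xdm} instead gives that the composition $E^+|_X \hookrightarrow E|_X \xrightarrow{\sigma} \bC_X$ is itself surjective (any failure would force $\im(\sigma_x^\vee)$ to meet $(E^-|_x)^\vee$, contradicting isotropy). Identifying $\bC_X = \bR_X^2$, this is a surjective morphism $\underline{\sigma}^+ \colon \Ob_{\xX_{dm}} \to \bR_X^2$ in the sense of Subsection~\ref{subsection: red class for mult cosection d-manifold}. I would then apply the multi-cosection reduced class construction from that subsection with $k=2$, which directly yields a well-defined reduced class $[\xX_{dm}]\virt_{\mathrm{red}} \in H_{n+2}(X, \bZ)$ together with $[\xX_{dm}]\virt = 0$.

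The hard part has, in effect, already been absorbed into Theorem~\ref{thm:surjective cosection on Xdm}: the delicate step is matching the algebraic dichotomy (isotropic vs.\ non-degenerate for the quadratic form $q$) with the differential geometric dichotomy (complex vs.\ real cosection on $\xX_{dm}$), and this hinges on the compatibility of the Oh--Thomas global complex Kuranishi chart with the Borisov--Joyce truncation $\sS_{Y, E^+, s^+}$. Once this is granted, what remains is formal: the constructions of Section~3 handle independence of the principal model and of the auxiliary metric used to split the kernel bundle, so no further verification is required beyond citing the corresponding results.
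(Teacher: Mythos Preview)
Your proposal is correct and follows essentially the same approach as the paper, which states only that ``Theorem~\ref{thm:surjective cosection on Xdm} and its proof immediately imply the following statement on reduced classes'' without giving a separate proof. You have simply made explicit the two-step reduction (invoke Theorem~\ref{thm:surjective cosection on Xdm} to obtain the appropriate surjective cosection on $\xX_{dm}$, then apply Theorem~\ref{thm:reduced class for d-manifold} or Subsection~\ref{subsection: red class for mult cosection d-manifold}) that the paper leaves implicit.
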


\section{Comparison between differential geometric and complex analytic cosection localized virtual fundamental classes}

We now proceed to extend the comparison result of \cite{OhThomasII} in the case of even virtual dimension, showing that the differential geometric and complex analytic versions of the virtual fundamental class of a $(-2)$-shifted symplectic scheme are equal in homology, to the cosection localized case.

\subsection{The square root Euler class} \label{subsection:square root euler} We start by giving a homological formula for the square root Euler class of Oh--Thomas \cite{OhThomasI} and Kiem--Park \cite{KiemPark}. 

We set up some notation. Let $E$ be an $SO(2r, \bC)$-bundle over a scheme $Y$ together with two isotropic sections $s,\sigma^\vee$ satisfying $s \cdot \sigma^\vee = 0$ where $(-) \cdot (-)$ denotes the pairing induced by the quadratic form on $E$. Using the quadratic structure of $E$, the section $\sigma^\vee$ is equivalent to a cosection $\sigma \colon E \cong E^\vee \to \oO_Y$. 

Let $X$ be the vanishing locus of $s$ and $X(\sigma) \subseteq X$ the vanishing locus of $\sigma|_X$ and moreover assume that $s,\sigma^\vee$ are linearly independent away from a closed subset $Z \subseteq Y$ which contains the vanishing locus $\sigma^{-1}(0)$ and satisfies $X \cap Z = X(\sigma)$ (cf. \cite[Definition~6.4]{KiemPark}). From now on, let us write $t=\sigma^\vee$ for brevity and consistency with the notation of Kiem--Park \cite{KiemPark}.

Kiem--Park then define a cosection localized square root Euler class map
\begin{align} \label{eq:loc 5.1}
\sqrt{e}(E,s;t) \colon A_\ast (Y, \bZ) \lr A_{\ast - r}(X(\sigma), \bZ [1/2]).
\end{align}

We establish some further notation and recall the definition in order to prove a formula in homology. 

Let $\rho \colon \tilde{Y} \to Y$ be the blowup of $Y$ along $X$ with exceptional divisor $D$. Write $L = \oO_{\tilde{Y}}(D)$ and $\tilde{E} = L^{\perp}/ L$ for the induced $SO(2r-2, \bC)$-bundle. Using the fact that $s \cdot t = 0$, the pullback $\rho^\ast t \in H^0(\tilde{Y}, \rho^\ast E)$ induces an isotropic section $\tilde{t}$ of $\tilde{E}$. Write $s_D$ for the tautological section of $L$.

We have a commutative diagram
\begin{align} \label{eq:loc 4.2}
\xymatrix{
0 \ar[r] & L \ar[r] \ar@{=}[d] & L^\perp \ar[r] \ar[d] & \tilde{E} \ar[r] \ar[d] & 0 \\
0 \ar[r] & L \ar[r] & \rho^\ast E \ar[r] & K \ar[r] & 0
}
\end{align}
where the rows are short exact sequences of vector bundles. By definition, $\rho^\ast s$ factors through the tautological section $s_D$.

We also have a commutative diagram
\begin{align}
\xymatrix{
D(t) \ar[r]^-{\imath''} \ar[d] & D(\tilde{t}) \ar[r]^-{\jmath'} \ar[d]^-{\rho''} & D \ar[r]^-{\jmath} \ar[d]^-{\rho'} & \tilde{Y} \ar[d]^-{\rho} \\
X(\sigma) \ar[r]_-{\imath''} & X(\sigma)^\# \ar[r]_{\imath'} & X \ar[r]_-{\imath} & Y,
}
\end{align}
in which $X(\sigma)^\# = \rho'(D \cap \tilde{t}^{-1}(0)) \cup X(\sigma) \subseteq X$ and the rightmost square is cartesian.

By our assumption on the independence of $s$ and $t$, \cite[Theorem~6.5]{KiemPark} shows that $X(\sigma)^\#_\mathrm{red} \subseteq X(\sigma)_\mathrm{red}$, so, while in general the codomain of $\sqrt{e}(E,s;t)$ is the Chow group of $X(\sigma)^\#$, we may replace it by $X(\sigma)$, as we have already done in~\eqref{eq:loc 5.1}.

Let $\gamma \in A_\ast (Y, \bZ)$. Then there exist  classes $\alpha \in A_\ast (\tilde{Y},\bZ), \beta \in A_\ast(X,\bZ)$ such that $\gamma = \rho_\ast \alpha + \imath_\ast \beta$ and Kiem--Park \cite{KiemPark} define
\begin{align} \label{eq: KiemPark def}
\sqrt{e}(E,s;t) = \rho_\ast'' \jmath^! \sqrt{e}(\tilde{E},\tilde{t}) \alpha + \imath''_\ast \sqrt{e}(F,t)\beta,
\end{align}
where $\sqrt{e}$ is the square root Euler class defined by Oh--Thomas \cite{OhThomasI}. It is shown in \cite{KiemPark} that this formula is well-defined and enjoys the expected properties of an Euler class.

Since $E$ is an oriented, orthogonal bundle, write $E \cong E^+ \otimes_{\bR} \bC$ and $s^+, t^+$ for the real parts of $s,t$ respectively. We reserve the same notation for other quadratic bundles and their sections, when clear from context.

\begin{lem} \label{lem:formula for square root}
Under the cycle class map $A_\ast(X(\sigma),\bZ [1/2]) \to H_{2\ast}(X(\sigma),\bZ [1/2])$, we have
\begin{align} \label{eq:square root formula}
\sqrt{e}(E,s;t) [Y] = [C(s^+)] \cap [\Gamma_{t^{+}}],
\end{align}
where the intersection takes place inside $E^+$.
\end{lem}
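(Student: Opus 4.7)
The plan is to compute the algebraic class $\sqrt{e}(E,s;t)[Y]$ by unwinding the Kiem--Park definition~\eqref{eq: KiemPark def} of the cosection-localized square root Euler class in terms of the blowup $\rho: \tilde Y \to Y$ along $X$, and then applying the Oh--Thomas II homological comparison at each stage to convert the resulting algebraic Chow-theoretic expression into a topological intersection inside $E^+$.

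First, in~\eqref{eq: KiemPark def} take the decomposition $[Y] = \rho_*[\tilde Y] + \imath_* \cdot 0$, so that
\begin{align*}
\sqrt{e}(E,s;t)[Y] = \rho''_* \jmath^! \sqrt{e}(\tilde E, \tilde t)[\tilde Y].
\end{align*}
Here $\tilde E$ is an $SO$-bundle on $\tilde Y$ inheriting its orientation from $E$, and $\tilde t$ is an isotropic section. Since the construction of the global complex Kuranishi chart in \cite{OhThomasII} is compatible with the blowup (the triple $(\tilde Y, \tilde E, \tilde t)$ again satisfies the local properties (1)-(4) of Subsection~\ref{subsection: BJ and OT classes}), the homological argument of Oh--Thomas II applies verbatim to give
\begin{align*}
\sqrt{e}(\tilde E, \tilde t)[\tilde Y] = 0_{\tilde E^+}^! [\Gamma_{\tilde t^+}] \in H_*(Z(\tilde t),\bZ[1/2]).
\end{align*}
It therefore suffices to establish the purely topological identity
\begin{align*}
\rho''_* \jmath^! \, 0_{\tilde E^+}^! [\Gamma_{\tilde t^+}] = [C(s^+)] \cap [\Gamma_{t^+}] \in H_*(X(\sigma),\bZ[1/2]).
\end{align*}

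The next step is to identify this combination of pullbacks and pushforwards with the intersection on the right-hand side. The blowup $\rho$ realizes the deformation to the normal cone of $X$ in $Y$, with exceptional divisor $D = \bP(C_{X/Y})$ parametrizing normal directions. The tautological line $L = \oO_{\tilde Y}(D) \subseteq \rho^* E$ captures the direction of $s$ (as $\rho^* s$ factors through $s_D$), while the quotient $\tilde E^+ = (L^\perp/L)^+$ corresponds to the real orthogonal complement of $s^+$ in the blowup picture. Restricting to $D$ via $\jmath^!$, intersecting with the zero section via $0_{\tilde E^+}^!$, and pushing down via $\rho''_*$ then recovers the Siebert homological normal cone $[C(s^+)]$, while the pullback of $[\Gamma_{t^+}]$ along $\rho$ is compatible with all three operations.

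Finally, the support of the resulting class is controlled by the cone reduction criterion \cite[Proposition~4.1]{SavCosection}: since $s \cdot t = 0$, differentiating along $X$ forces $C(s^+) \subseteq (t^+)^\perp \subseteq E^+|_X$, so that $C(s^+) \cap \Gamma_{t^+}$ is set-theoretically concentrated at the zero section over $X(\sigma) = \{x \in X : t^+(x) = 0\}$; this ensures the intersection $[C(s^+)] \cap [\Gamma_{t^+}]$ is well-defined in $H_*(X(\sigma),\bZ[1/2])$, matching the output of the Kiem--Park construction. The main technical obstacle is carrying out the third step cleanly, namely tracking the real/complex decomposition $E \cong E^+ \otimes_\bR \bC$ through the blowup and verifying that the algebraic operations $\rho''_*, \jmath^!, 0_{\tilde E^+}^!$ faithfully reproduce Siebert's homological normal cone construction against the graph of $t^+$.
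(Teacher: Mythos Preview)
Your first two moves match the paper: write $[Y]=\rho_*[\tilde Y]$ so that $\sqrt{e}(E,s;t)[Y]=\rho''_*\jmath^!\sqrt{e}(\tilde E,\tilde t)[\tilde Y]$, and then invoke \cite{OhThomasII} to rewrite $\sqrt{e}(\tilde E,\tilde t)[\tilde Y]$ as $[\Gamma_{\tilde t^+}]\cap[0_{\tilde E^+}]$. (A small correction: the applicability of Oh--Thomas here has nothing to do with $(\tilde Y,\tilde E,\tilde t)$ satisfying the Kuranishi-chart properties (1)--(4); those results are general statements about localized square root Euler classes of oriented orthogonal bundles with isotropic sections.)

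The genuine gap is your third step. You describe $\rho$ as ``realizing the deformation to the normal cone'' (it does not; the blowup is a different construction, with $D=\bP(C_{X/Y})$), and then assert that the composite $\rho''_*\jmath^!0^!_{\tilde E^+}$ ``recovers the Siebert normal cone $[C(s^+)]$'' intersected with $[\Gamma_{t^+}]$, conceding yourself that this is the ``main technical obstacle''. As written, this is not a proof: you have not explained why $\jmath^!$ produces anything related to $s^+$, nor how $\tilde t^+$ turns into $t^+$ under pushforward. The paper handles this step quite differently and concretely. One chooses a smooth $\bC$-linear splitting $\rho^*E\cong\tilde E\oplus R$ with $L\hookrightarrow R$; since $L$ is isotropic, the projection $L\to R^+$ is an isomorphism of real bundles carrying $s_D$ to $\rho^*s^+$. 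This identifies the Gysin map $\jmath^!$ (intersection with $Z(s_D)$) with intersection against $[\Gamma_{\rho^*s^+}]$ inside $\rho^*E^+$, so that
\[
\jmath^!\bigl([\Gamma_{\tilde t^+}]\cap[0_{\tilde E^+}]\bigr)=[\Gamma_{\rho^*s^+}]\cap[\Gamma_{\tilde t^+}].
\]
One then replaces $\tilde t^+$ by $\rho^*t^+$ via a homotopy (their graphs meet $\Gamma_{\rho^*s^+}$ in the same set), and finishes by the projection formula $\rho_*\bigl([\Gamma_{\rho^*s^+}]\cap\rho^*[\Gamma_{t^+}]\bigr)=[\Gamma_{s^+}]\cap[\Gamma_{t^+}]$, using finally that $[\Gamma_{s^+}]=[C(s^+)]$ as classes in $H_*(E^+)$. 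None of these ingredients---the splitting, the identification $L\cong R^+$, the homotopy, the projection formula---appear in your sketch, and they are where the actual content lies.
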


\begin{proof}
Using the notation preceding the lemma, since $[Y] = \rho_\ast [\tilde{Y}]$, \eqref{eq: KiemPark def} implies that
\begin{align} \label{eq: loc 4.5}
\sqrt{e}(E,s;t) (\alpha) & = \rho_\ast'' \jmath^! \sqrt{e}(\tilde{E},\tilde{t}) [\tilde{Y}] \\
\notag & = \rho_\ast'' \jmath^! ([C(\tilde{t}) ]\cap [\tilde{E}^{-}]) \\
\notag & = \rho_\ast'' \jmath^! ([C(\tilde{t}^+)] \cap [0_{\tilde{E}^{+}}])\\
\notag & = \rho_\ast'' \jmath^! ([\Gamma_{\tilde{t}^+}] \cap [0_{\tilde{E}^{+}}]),
\end{align}
where the two intermediate equalities follow from \cite[Theorem~10.2]{OhThomasII}, \cite[Theorem~7.23]{OhThomasII} and \cite[Equation~(7.25)]{OhThomasII}, and the last equality from the fact that, as homology classes on $\tilde{E}^+$, $[C(\tilde{t}^+)] = [\Gamma_{\tilde{t}^+}]$, whose intersection with the zero section is contained inside the zero locus $\tilde{t}^{-1}(0)$.

We may now choose a $\bC$-linear smooth splitting of the first row of~\eqref{eq:loc 4.2} and consider $\tilde{E}$ as a subbundle of $\rho^\ast E$, which intersects $L$ trivially. In particular, we can write $\rho^\ast E = \tilde{E} \oplus R$ for some vector bundle $R$, such that the inclusion map $L \to \rho^\ast E$ factors through $R$, and thus we have an induced splitting $\rho^\ast E^+ = \tilde{E}^+ \oplus R^+$. 

Note that since $L$ is an isotropic (complex) line bundle in $\rho^\ast E$, the composite morphism $L \to \rho^\ast E \to R^+$ is an isomorphism of smooth (real) bundles which maps the section $s_D$ to $\rho^\ast s^+$.

Based on these observations, it follows by the definition of $\jmath^!$ that
\begin{align} \label{eq: loc 4.6}
\jmath^!  ([\Gamma_{\tilde{t}^+}] \cap [0_{\tilde{E}^{+}}]) = [\Gamma_{\rho^\ast s^+}] \cap [\Gamma_{\tilde{t}^+}],
\end{align}
where the intersection takes place inside $\rho^\ast E^+$. 

Since the sections $\tilde{t}^+$ and $\rho^\ast t^+$ are homotopic to each other and their graphs intersect the graph of $\rho^\ast s^+$ at the same subset of $\rho^\ast E^+$, we further obtain
\begin{align} \label{eq: loc 4.7}
[\Gamma_{\rho^\ast s^+}] \cap [\Gamma_{\tilde{t}^+}] = [\Gamma_{\rho^\ast s^+}] \cap [\Gamma_{\rho^\ast{t}^+}].
\end{align}

Finally, observe that $[\Gamma_{\rho^\ast{t}^+}] = \rho^\ast [\Gamma_{t^+}]$, where by abuse of notation $\rho$ also denotes the map $\rho^\ast E^+ \to E^+$, so by the projection formula
\begin{align} \label{eq: loc 4.8}
\rho_\ast \left( [\Gamma_{\rho^\ast s^+}] \cap [\Gamma_{\rho^\ast{t}^+}] \right) & = \rho_\ast \left( [\Gamma_{\rho^\ast s^+}] \cap \rho^\ast [\Gamma_{t^+}] \right) \\
\notag & = [\Gamma_{s^+}] \cap [\Gamma_{{t}^+}] \\
\notag & = [C(s^+)] \cap [\Gamma_{t^+}],
\end{align}
again using the fact that $[\Gamma_{s^+}] = [C(s^+)]$, as homology classes on $E^+$.

Combining~\eqref{eq: loc 4.5}, \eqref{eq: loc 4.6}, \eqref{eq: loc 4.7} and \eqref{eq: loc 4.8} finishes the proof.
\end{proof}

\begin{rmk}
Observe that in the above, $E$, $s$ and $\sigma^\vee$ do not need to be algebraic or holomorphic, given that $X, Y$ are complex analytic spaces and the inclusions $X(\sigma) \to X$ and $X \to Y$ are complex analytic. It is sufficient that they are smooth or even continuous, and complex linear. This will be implicitly used in the following section.
\end{rmk}

\subsection{Everything agrees}
Let $(\undxX, \omega_{\undxX})$ be a projective, oriented $(-2)$-shifted symplectic derived scheme of (complex) virtual dimension $n = \rk \bL_{\undxX}|_X$ with an isotropic cosection $\sigma$, which is surjective on an open subset $U(\sigma)$ with complement $X(\sigma) = X \setminus U(\sigma)$. 

Take $(Y,E,s)$ a global complex Kuranishi chart for $X$ with $E$ an $SO(2r,\bC)$-bundle, $s$ isotropic so that $\sigma$ is represented by an honest morphism of complexes
\begin{align}
\xymatrix{
T_Y|_X \ar[r]^-{\dd s} \ar[d] & E|_X \simeq E|_X^\vee \ar[r]^-{\dd s^\vee} \ar[d]^-{\sigma} &\Omega_Y|_X \ar[d] \\
0 \ar[r] & \oO_X \ar[r] & 0,
}
\end{align}
and extends smoothly to a morphism of smooth vector bundles which is surjective on an open neighborhood $U(\sigma) \subseteq V$ of $U(\sigma)$ in $Y$. Moreover, using a smooth metric on $E$ and $|s|^2$ as a smooth function that vanishes on $X$ and is non-zero on the complement $Y\setminus X$, up to a smooth modification of $\sigma$ off $X$ and possible shrinking of $Y$ around $X$, we may assume that $\sigma \circ s = 0$ identically on $Y$, $\sigma^\vee$ is isotropic and $s, \sigma^\vee$ are independent away from $X(\sigma) = Y \cap \sigma^{-1}(0)$. Thus, we have in particular that $s \cdot \sigma^\vee = 0$ on $Y$.

Fix an isomorphism $E \cong E^+ \oplus E^-$, as usual and write $s^+, \sigma^+$ for the real components of $s, \sigma$ respectively.

\begin{lem} \label{lem:KP formula}
The Kiem--Park cosection localized virtual fundamental class of $X$ equals $\sqrt{e}(E,s;\sigma^\vee) [Y]$ in homology.
\end{lem}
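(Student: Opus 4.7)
The plan is to identify the Kiem--Park cosection localized virtual fundamental class, which is defined intrinsically via the intrinsic normal cone, with the explicit formula $\sqrt{e}(E,s;\sigma^\vee)[Y]$ obtained from the global chart $(Y,E,s)$, thereby reducing the lemma to an application of the deformation invariance properties of the cosection localized square root Euler class established in \cite{KiemPark}.

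First I would recall that in the presence of a global orthogonal chart $(Y,E,s)$ for $X$, the intrinsic normal cone of $X$ admits a natural realization as a cone $C \subseteq E|_X$, namely the Behrend--Fantechi normal cone of the section $s$, and Kiem--Park define the cosection localized virtual fundamental class as $\sqrt{e}(E|_X, \tau; \sigma^\vee|_X)[C]$, where $\tau$ is the tautological section of $E|_C$ obtained by pulling back $s$ along the projection $C \to X$. The goal is thus to establish the identity $\sqrt{e}(E,s;\sigma^\vee)[Y] = \sqrt{e}(E|_X, \tau; \sigma^\vee|_X)[C]$ in homology.

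Next I would carry out this reduction using the deformation to the normal cone $M = M^\circ_{X/Y}$, which provides a flat family whose generic fiber is $Y$ and whose special fiber is $C \subseteq E|_X$. The bundle $E$ and the cosection $\sigma^\vee$ extend canonically to this family by flat pullback, and the section $s$ specializes to the tautological section $\tau$ on the special fiber. Applying the deformation invariance of the cosection localized square root Euler class, together with the birational invariance/functoriality properties developed in \cite{KiemPark} that underlie the formula~\eqref{eq: KiemPark def}, yields the desired equality in the Chow group, which then passes to homology via the cycle class map.

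The hard part will be verifying that the hypotheses required for the cosection localized square root Euler class to be well-defined (the isotropy of $s$ and $\sigma^\vee$, the relation $s \cdot \sigma^\vee = 0$, and the existence of a closed subset containing $\sigma^{-1}(0)$ whose intersection with the zero locus of $s$ coincides with $X(\sigma)$) are preserved throughout the deformation to the normal cone, and in particular on the central fiber where $s$ is replaced by $\tau$. This amounts to a geometric check using the explicit description of the deformation space together with the smooth modification of $\sigma$ off $X$ arranged earlier in this subsection, which guarantees that the independence condition survives in the limit and that the vanishing loci behave as required.
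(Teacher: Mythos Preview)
Your proposal is correct and follows essentially the same approach as the paper: identify the Kiem--Park class as $\sqrt{e}(E|_{C(s)},\tau;\sigma^\vee|_{C(s)})[C(s)]$ via \cite[Theorem~8.2]{KiemPark}, then use deformation to the normal cone together with deformation invariance of the (cosection localized) square root Euler class to equate this with $\sqrt{e}(E,s;\sigma^\vee)[Y]$. The paper's proof is terser and cites \cite[Sections~4.2, 4.3]{OhThomasI} for the deformation invariance step rather than \cite{KiemPark}, and it does not spell out the hypothesis-preservation check you flag as the ``hard part,'' but the argument is the same.
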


\begin{proof}
Let $\tau$ be the tautological section of $E|_{C(s)}$ whose zero locus equals $X \subseteq C(s)$. By \cite[Theorem~8.2]{KiemPark}, the homology class of the algebraic cosection localized virtual fundamental class of $X$ is equal to
\begin{align*}
\sqrt{e}(E|_{C(s)},\tau;\sigma|_{C(s)}^\vee) [C(s)] \in H_n(X(\sigma), \bZ[1/2]).
\end{align*}

However, by deformation to the normal cone, the data $(Y, E, s)$  deform to the tautological section $(C(s), E|_{C(s)}, \tau)$, together with the cosection $\sigma$ which deforms to $\sigma|_{C(s)}$. Thus, by deformation invariance of the square root Euler class (see \cite[Sections~4.2, 4.3]{OhThomasI}), it follows that $$\sqrt{e}(E|_{C(s)},\tau;\sigma|_{C(s)}^\vee)[C(s)] = \sqrt{e}(E,s;\sigma^\vee) [Y],$$ as we want.
\end{proof}

\begin{thm} \label{thm:main thm}
$[\xX_{dm}]_{\loc,\sigma}\virt = \sqrt{e}(E,s;\sigma^\vee) [Y] \in H_n(X(\sigma), \bZ[1/2])$.
\end{thm}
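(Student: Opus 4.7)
The strategy is to rewrite both sides as concrete homological intersections inside the real oriented bundle $E^+$ and to match them. By Lemma~\ref{lem:KP formula} together with Lemma~\ref{lem:formula for square root}, the right-hand side equals $[C(s^+)] \cap [\Gamma_{(\sigma^\vee)^+}]$, with the intersection taken in $E^+$. The first step is to verify that this intersection cycle is topologically supported on $E^+|_{X(\sigma)}$, so that it genuinely defines a class in $H_n(X(\sigma), \bZ[1/2])$. Over $U(\sigma)$, the cone reduction criterion \cite[Proposition~4.1]{SavCosection}, applied via the surjective cosection $\sigma^+$ produced by Theorem~\ref{thm:surjective cosection on Xdm}, forces $C(s^+)|_{U(\sigma)}$ to lie in the kernel bundle of $\sigma^+$; on the other hand the section $(\sigma^\vee)^+$ nowhere lies in this kernel over $U(\sigma)$, being paired nontrivially with $\sigma^+$ through the positive real part of $q$ on $E^+$. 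Hence $\Gamma_{(\sigma^\vee)^+}$ and $C(s^+)$ are disjoint away from $X(\sigma)$, which provides the required localization.

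The main step is then to identify this localized intersection class with the differential geometric $[\xX_{dm}]\virt_{\loc,\sigma}$. Recall from \cite{SavCosection} that the latter is obtained as $0^!_{E^+,\loc}[C(s^+)]$, where $0^!_{E^+,\loc}$ is the Kiem--Li style localized Gysin map refined along the cone reduction $C(s^+) \subseteq E^+(\sigma^+)|_{U(\sigma)} \cup E^+|_{X(\sigma)}$ and landing in $H_n(X(\sigma))$. I would realize this Gysin map via the linear homotopy $\{\Gamma_{t \cdot (\sigma^\vee)^+}\}_{t \in [0,1]}$ between the zero section of $E^+$ at $t=0$ and $\Gamma_{(\sigma^\vee)^+}$ at $t=1$. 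By the argument of the previous paragraph, for every $t>0$ the intersection of $\Gamma_{t \cdot (\sigma^\vee)^+}$ with $C(s^+)$ remains supported over $X(\sigma)$, so the full deformation stays within the admissible locus $E^+(\sigma^+)|_{U(\sigma)} \cup E^+|_{X(\sigma)}$ on which the Kiem--Li localized Gysin map is defined. By the very characterization of the Kiem--Li construction --- or equivalently by homotopy invariance of Borel--Moore intersection applied to this compactly supported family --- this equates $[C(s^+)] \cap [\Gamma_{(\sigma^\vee)^+}]$ with $[\xX_{dm}]\virt_{\loc,\sigma}$ in $H_n(X(\sigma), \bZ[1/2])$.

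The main obstacle I anticipate is rigorously handling the refined intersection in Borel--Moore homology with merely smooth (rather than holomorphic or algebraic) data $(\sigma^\vee)^+$ and $s^+$, as well as uniformly covering the two flavors of cosection --- isotropic versus non-degenerate --- emerging from Theorem~\ref{thm:surjective cosection on Xdm}. However, because $E^+$ is an oriented smooth real bundle and the cone reduction criterion has been established smoothly in \cite{SavCosection}, the intersections involved are the standard ones in singular homology, and a smooth analogue of the blowup construction of Subsection~\ref{subsection:square root euler}, or alternatively a direct homotopy invariance argument, should handle both cases uniformly.
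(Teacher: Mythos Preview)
Your proposal is correct and follows essentially the same route as the paper: both sides are reduced, via Lemma~\ref{lem:formula for square root} and Lemma~\ref{lem:KP formula}, to the intersection $[C(s^+)] \cap [\Gamma_{(\sigma^\vee)^+}]$ inside $E^+$. The only difference is that the paper dispatches the identification of the left-hand side with this intersection in one line by citing \cite[Theorem-Definition~5.3]{SavCosection}, where the equality $[\xX_{dm}]_{\loc,\sigma}\virt = [C(s^+)] \cap [\Gamma_{(\sigma^+)^\vee}]$ is established \emph{by construction}; your homotopy argument via $t\cdot(\sigma^\vee)^+$ and the cone reduction criterion is essentially a recapitulation of how that result is proved there, so your ``main step'' is redundant rather than new.
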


\begin{proof}
This follows immediately from~\eqref{eq:square root formula} and Lemma~\ref{lem:KP formula}, since, by \cite[Theorem-Definition~5.3]{SavCosection}, it holds by construction that $[\xX_{dm}]_{\loc,\sigma}\virt = [C(s^+)] \cap [\Gamma_{(\sigma^+)^\vee}]$.
\end{proof}

Combining the above, we conclude that the algebraic and differential geometric cosection localized virtual fundamental classes agree in homology.

\begin{rmk}
Besides producing integral classes in homology, one advantage of the differential geometric approach to cosection localization is seen in the case of odd virtual dimension, where we still have potentially non-trivial cosection localized virtual classes, which however must be $2$-torsion by the results of \cite{OhThomasII}. It is an interesting question to examine whether this phenomenon of non-trivial $2$-torsion virtual fundamental classes (cosection localized or not) can indeed occur.
\end{rmk}

\subsection{Equality of reduced classes: the isotropic case}

We continue to explain that our comparison result is also true for reduced classes defined in the presence of surjective cosections.

We work in the following slightly more general setup (cf. \cite[Assumption~8.1]{KiemPark}): Let $X$ be a projective scheme with a symmetric obstruction theory $\mathbb{E}^\bullet \to \bL_X$, where $\mathbb{E}^\bullet$ is perfect of amplitude $[-2,0]$ and even rank $n = \rk \mathbb{E}^\bullet$ and there is a resolution $\mathbb{E}^\bullet \cong [B \xrightarrow{d} F \cong F^\vee \xrightarrow{d^\vee} B^\vee]$. Moreover, assume that we have an orientation $\oO_X \cong \det \mathbb{E}^\bullet$ so that $F$ is an $SO(2r, \bC)$-bundle and the cone $C = \cC_X \times_{[F/B]} F \subseteq F$ is isotropic. 

Let us now assume that there is a surjective, isotropic cosection $\sigma \colon \mathbb{E}_\bullet[1] = (\mathbb{E}^\bullet)^\vee[1] \to \oO_X$, which is induced by a morphism $\sigma \colon F \to \oO_X$.

By \cite{OhThomasII}, given the above, there is a global, orthogonal complex Kuranishi chart $(Y,E,s)$ for $X$ such that $F=E|_X, d=\dd s$, $s$ is an isotropic section of $E$ and $\sigma$ extends smoothly to a morphism $\sigma \colon E \to \bC_Y$. As in the previous subsection, we may assume that $\sigma^\vee$ is isotropic, $s \cdot \sigma^\vee = 0$ and $s, \sigma^\vee$ are independent away from $Y \cap \sigma^{-1}(0) = X(\sigma)$.

The associated compact, oriented d-manifold to these data is $\xX = \sS_{Y,E^+,s^+}$.
\smallskip

Consider the $SO(2r-2, \bC)$-bundle $\tilde{E} = \langle \sigma^\vee \rangle^\perp / \langle \sigma^\vee \rangle$. By the orthogonal cone reduction lemma \cite{KiemPark}, there is a closed embedding $C(s) \subseteq \tilde{E}$ and the reduced algebraic virtual fundamental cycle of $X$ is defined by the formula \cite[Definition~8.7]{KiemPark}
\begin{align}
[X]\virt_{\mathrm{red}} := \sqrt{e}(\tilde{E}|_{C(s)}, \tau)[C(s)] \in A_{\frac{1}{2}n+1}(X, \bZ[1/2]),
\end{align}
where $\tau$ denotes, as usual, the tautological section of $\tilde{E}|_{C(s)}$ whose zero locus is $X$.

By the orthogonality and independence of $s$ and $\sigma^\vee$, $s$ induces a smooth isotropic section $\tilde{s}$ of $\tilde{E}$ with zero locus $X$. We thus obtain a principal compact, oriented d-manifold $\xX_\mathrm{red} = \sS_{Y, \tilde{E}^+, \tilde{s}^+}$. 

It is easy to see that $C(s) = C(\tilde{s})$, so by deformation to the normal cone, we obtain the equality 
\begin{align} \label{eq:loc 4.12}
[X]_\mathrm{red}\virt = \sqrt{e}(\tilde{E}|_{C(s)}, \tau)[C(s)] = \sqrt{e}(\tilde{E}, \tilde{s})[Y] = [\xX_{\mathrm{red}}]\virt \in H_{n+2}(X, \bZ[1/2]).
\end{align}

On the other hand, we may choose an orthogonal $\bC$-linear smooth splitting $E \cong \tilde{E} \oplus R$. Using the splitting, the surjection of smooth vector bundles $R \to \bC_Y \cong \bR_Y^{2}$ restricts to a surjection $\sigma^+ \colon R^+ \subseteq E \to \bR_Y^2$. This follows from the fact that $\sigma^\vee$ is isotropic and factors through $R^\vee \cong R$, so its projection to $R^+$ is always non-zero. As in Theorem~\ref{thm:reduced for shifted symplectic}, $\sigma^+$ defines a surjective cosection of rank $2$ on the d-manifold $\xX = \sS_{Y,E^+,s^+}$. By Subsection~\ref{subsection: red class for mult cosection d-manifold}, the reduced virtual fundamental class of $\xX$ is then by definition
\begin{align} \label{eq:loc 4.13}
[\xX]_{\mathrm{red}}\virt := [\xX_{\mathrm{red}}]\virt \in H_{n+2}(X, \bZ).
\end{align}

Combining~\eqref{eq:loc 4.12} and \eqref{eq:loc 4.13} yields the following result.

\begin{thm} \label{thm:isotropic surj cosection}
Let $X$ be a projective scheme equipped with an oriented symmetric obstruction theory of amplitude $[-2,0]$, isotropic normal cone and isotropic surjective cosection $\sigma$, as above. Let $\xX$ be the compact, oriented d-manifold associated to these data and $\sigma^+$ the induced surjective cosection on $\xX$. 

Then the algebraic reduced virtual fundamental cycle of $X$ and the reduced virtual fundamental class of $\xX$ are equal in homology with $\bZ[1/2]$-coefficients, that is,
$$[X]_{\mathrm{red}}\virt = [\xX]_{\mathrm{red}}\virt \in H_{n + 2}(X, \bZ[1/2]).$$
\end{thm}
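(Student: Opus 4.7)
The plan is to exploit the fact that a global complex Kuranishi chart $(Y,E,s)$ for $X$ already exists by the machinery of Oh--Thomas, and to identify a single auxiliary principal d-manifold $\xX_\mathrm{red}$ whose virtual class simultaneously computes both sides. The common intermediary is the reduced orthogonal bundle $\tilde E = \langle \sigma^\vee \rangle^\perp / \langle \sigma^\vee \rangle$, which underlies both the algebraic Kiem--Park reduction and the d-manifold multiple-cosection reduction of Subsection~\ref{subsection: red class for mult cosection d-manifold}.

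First I would handle the algebraic side. By the orthogonal cone reduction lemma, the section $s$ descends to an isotropic section $\tilde s$ of $\tilde E$ with the same zero locus $X$, and $C(s) = C(\tilde s)$ as subsets of $\tilde E$. Deformation to the normal cone together with deformation invariance of the square root Euler class then gives
\begin{equation*}
[X]_{\mathrm{red}}\virt = \sqrt{e}(\tilde E|_{C(s)}, \tau)[C(s)] = \sqrt{e}(\tilde E, \tilde s)[Y] \in A_{\frac{n}{2}+1}(X,\bZ[1/2]).
\end{equation*}
Applying the homological formula for the square root Euler class from Lemma~\ref{lem:formula for square root} to the pair $(\tilde E, \tilde s)$ (with trivial cosection, i.e., $t=0$, interpreted via the untwisted version of the formula) converts the right-hand side into $[C(\tilde s^+)] \cap [0_{\tilde E^+}]$ inside $\tilde E^+$. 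By the Siebert-type description of d-manifold virtual classes recalled in Subsection~2.1, this is exactly $[\xX_\mathrm{red}]\virt$ for the principal compact, oriented d-manifold $\xX_\mathrm{red} = \sS_{Y, \tilde E^+, \tilde s^+}$, so one gets $[X]_{\mathrm{red}}\virt = [\xX_\mathrm{red}]\virt$ in $H_{n+2}(X,\bZ[1/2])$.

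Next I would handle the d-manifold side. Choose an orthogonal $\bC$-linear smooth splitting $E \cong \tilde E \oplus R$, which induces a real splitting $E^+ \cong \tilde E^+ \oplus R^+$ and sends $s$ to $(\tilde s, 0)$ by isotropy. Since $\sigma^\vee$ is complex isotropic and factors through $R$, its projection onto $R^+$ is everywhere nonzero, so $\sigma$ restricts to a surjection $\sigma^+ \colon R^+ \hookrightarrow E^+ \to \bR_Y^2$, which produces a surjective real cosection of rank $2$ on $\xX = \sS_{Y,E^+,s^+}$ whose kernel bundle is precisely $\tilde E^+$. By the definition of the reduced class with multiple cosections in Subsection~\ref{subsection: red class for mult cosection d-manifold}, this gives $[\xX]_{\mathrm{red}}\virt = [\xX_\mathrm{red}]\virt \in H_{n+2}(X,\bZ)$ tautologically. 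Combining with the algebraic identification above finishes the proof.

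The main obstacle I anticipate is the passage through Lemma~\ref{lem:formula for square root} in the case where no nontrivial cosection $t$ is present on $\tilde E$; one must verify that the same blowup-and-split argument gives the unlocalized formula $\sqrt{e}(\tilde E, \tilde s)[Y] = [C(\tilde s^+)]\cap [0_{\tilde E^+}]$, which reduces to a slightly easier version of the computation already carried out in the lemma. A secondary delicate point is justifying the existence of the orthogonal splitting $E \cong \tilde E \oplus R$ adapted to $\sigma^\vee$ globally on $Y$ (and not just near $X$); this is not automatic algebraically, but is harmless in the smooth category, which is where the d-manifold side is computed.
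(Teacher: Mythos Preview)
Your proposal is correct and mirrors the paper's argument essentially line by line: both pass through the auxiliary d-manifold $\xX_\mathrm{red}=\sS_{Y,\tilde E^+,\tilde s^+}$, identify the algebraic reduced class with $[\xX_\mathrm{red}]\virt$ via deformation to the normal cone together with the untwisted Oh--Thomas comparison (your appeal to Lemma~\ref{lem:formula for square root} with $t=0$ is exactly that result, so your ``main obstacle'' dissolves), and identify the d-manifold reduced class with $[\xX_\mathrm{red}]\virt$ directly from the definition in Subsection~\ref{subsection: red class for mult cosection d-manifold}. One minor slip: the splitting does not send $s$ to $(\tilde s,0)$ --- the $R$-component of $s$ lies in $L\subseteq R$ and need not vanish --- but this is harmless, since only the projection of $s^+$ to $\tilde E^+$ enters the reduced construction.
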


\begin{rmk}
Of course, the most natural scenario in which the conditions of the theorem arise is when $X$ is the classical truncation of an oriented $(-2)$-shifted symplectic scheme $(\undxX, \omega_{\undxX})$.
\end{rmk}

\subsection{Equality of reduced classes: the non-degenerate case}
We keep the notation of the previous subsection and move on to the case of a non-degenerate cosection $\underline{\sigma} \colon \mathbb{E}_\bullet[1] \to \oO_X^{\oplus d}.$

Following \cite{BaeKoolParkI,SavCosection}, non-degeneracy here means that the composition
$$\oO_X^{\oplus d} \xrightarrow{\underline{\sigma}^\vee} \mathbb{E}^\bullet [-1] \cong \mathbb{E}_\bullet[1] \xrightarrow{\underline{\sigma}} \oO_X^{\oplus d} $$
is an isomorphism. In particular, $\underline{\sigma}$ is surjective.

Let $\mathbb{E}^\bullet_\mathrm{red}$ be the cone of $\underline{\sigma}^\vee[1]$. Assume that $$(\mathbb{E}^\bullet_\mathrm{red})^\vee[2] \to \mathbb{E}_\bullet[2] \cong \mathbb{E}^\bullet \to \mathbb{E}^\bullet_\mathrm{red}$$ induces a symmetric form on $\mathbb{E}^\bullet_\mathrm{red}$ and there is a splitting of symmetric complexes $\mathbb{E}^\bullet \cong \mathbb{E}^\bullet_\mathrm{red} \oplus \oO_X^{\oplus d}[1]$.

In the notation of the previous subsection, we may fix a global, orthogonal complex Kuranishi chart $(Y,E,s)$ for $X$ and further assume that $\underline{\sigma}$ is induced by a split surjection of orthogonal bundles $F \to \oO_X^{\oplus d}$ which extends smoothly to a surjective $\bC$-linear morphism $\underline{\sigma} \colon E \to \bC_Y^d$ with kernel the oriented, orthogonal bundle $E_\mathrm{red}$. Since $\underline{\sigma}\circ s = 0$, $s$ factors through a section $s_\mathrm{red}$ of $E_\mathrm{red}$.

Let $\xX = \sS_{Y,E^+, s^+}$ and $\xX_\mathrm{red} = \sS_{Y,E_\mathrm{red}^+, s_\mathrm{red}^+}$ be the associated compact, oriented d-manifolds.

By \cite[Proposition~4.15, Theorem~4.5]{BaeKoolParkI}, the isotropic normal cone $C(s) = C(s_\mathrm{red})$ is contained in $E_\mathrm{red}$ and the reduced virtual fundamental cycle of $X$ is defined by the formula
\begin{align} \label{eq:loc 5.14}
[X]_\mathrm{red}\virt & := \sqrt{e}(E_\mathrm{red}|_{C(s)}, \tau) [C(s)] \\
\notag & = \sqrt{e}(E_\mathrm{red}|_{C(s_\mathrm{red})}, \tau)[C(s_\mathrm{red})] \\
\notag & = \sqrt{e} (E_\mathrm{red}, s_\mathrm{red})[Y] = [\xX_\mathrm{red}]\virt \in H_{n+d}(X, \bZ[1/2]),
\end{align}
where the last equality follows from Theorem~\ref{thm:main thm}.

As in Theorem~\ref{thm:reduced for shifted symplectic}, we have a composition $$\underline{\sigma}^+ \colon E^+ \subseteq E \xrightarrow{\underline{\sigma}} \bC_Y^d \to \bR_Y^d,$$ which defines a surjective cosection on the d-manifold $\xX$ and whose kernel bundle is exactly $E_{\mathrm{red}}^+$. By Subsection~\ref{subsection: red class for mult cosection d-manifold}, the reduced virtual fundamental class of $\xX$ is then by definition
\begin{align} \label{eq:loc 5.15}
[\xX]_{\mathrm{red}}\virt := [\xX_{\mathrm{red}}]\virt \in H_{n+d}(X, \bZ).
\end{align}

A similar argument to the previous subsection, combining~\eqref{eq:loc 4.12} and \eqref{eq:loc 4.13}, yields the following result.

\begin{thm} \label{thm:red for non-degenerate surj cosection}
Let $X$ be a projective scheme equipped with an oriented symmetric obstruction theory of amplitude $[-2,0]$, isotropic normal cone and non-degenerate cosection $\underline{\sigma}$, as above. Let $\xX$ be the compact, oriented d-manifold associated to these data and $\underline{\sigma}^+$ the induced surjective cosection on $\xX$. 

Then the algebraic reduced virtual fundamental cycle of $X$ and the reduced virtual fundamental class of $\xX$ are equal in homology, that is,
$$[X]_{\mathrm{red}}\virt = [\xX]_{\mathrm{red}}\virt \in H_{n + d}(X, \bZ[1/2]).$$
\end{thm}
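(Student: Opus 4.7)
The plan is to show both sides equal a common intermediate quantity, namely the (homological) virtual fundamental class $[\xX_{\mathrm{red}}]\virt$ of the auxiliary compact, oriented d-manifold $\xX_{\mathrm{red}} = \sS_{Y, E_{\mathrm{red}}^+, s_{\mathrm{red}}^+}$ constructed from a global orthogonal complex Kuranishi chart $(Y,E,s)$ for $X$ adapted to the non-degenerate cosection $\underline{\sigma}$. The existence of such a chart, with the splitting $E = E_{\mathrm{red}} \oplus \ker(\underline{\sigma})^\perp$ of oriented orthogonal bundles and factorization $s = s_{\mathrm{red}}$ through $E_{\mathrm{red}}$, is supplied by the discussion following Theorem~\ref{thm:reduced for shifted symplectic} and the complex Kuranishi machinery of \cite{OhThomasII} as already set up in the paragraph preceding the statement.

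First, I would handle the algebraic side. By the formula for the reduced cycle of Bae--Kool--Park together with the non-degeneracy hypothesis, the isotropic cone $C(s)$ equals $C(s_{\mathrm{red}})$ inside $E_{\mathrm{red}}$, and
\begin{align*}
[X]_{\mathrm{red}}\virt = \sqrt{e}(E_{\mathrm{red}}|_{C(s)}, \tau)[C(s)] = \sqrt{e}(E_{\mathrm{red}}|_{C(s_{\mathrm{red}})}, \tau)[C(s_{\mathrm{red}})]
\end{align*}
in $H_{n+d}(X, \bZ[1/2])$. Deforming the algebraic Kuranishi data $(Y, E_{\mathrm{red}}, s_{\mathrm{red}})$ to the tautological section $(C(s_{\mathrm{red}}), E_{\mathrm{red}}|_{C(s_{\mathrm{red}})}, \tau)$ via the normal cone and invoking deformation invariance of the square root Euler class gives $\sqrt{e}(E_{\mathrm{red}}, s_{\mathrm{red}})[Y]$. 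At this point I apply Theorem~\ref{thm:main thm} (in the absolute, non-localized case, i.e.\ with trivial auxiliary cosection) to the Kuranishi chart $(Y, E_{\mathrm{red}}, s_{\mathrm{red}})$, which identifies this square root Euler class computation in homology with the Borisov--Joyce virtual class $[\xX_{\mathrm{red}}]\virt$. This is precisely the chain of equalities recorded in \eqref{eq:loc 5.14}.

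Next, I would handle the differential geometric side. By the construction of the surjective real cosection $\underline{\sigma}^+ \colon E^+ \to \bR_Y^d$ from the complex cosection $\underline{\sigma}$ (exactly as in the proof of Theorem~\ref{thm:surjective cosection on Xdm}, using that non-degeneracy forces $\ker(\underline{\sigma})^\vee$ to project isomorphically onto $E^+$ in the $\bR$-linear decomposition $E = E^+ \oplus E^-$), the kernel bundle is identified with $E_{\mathrm{red}}^+$. Applying the definition of the reduced class for multiple cosections from Subsection~\ref{subsection: red class for mult cosection d-manifold} to the principal presentation $\xX \simeq \sS_{Y, E^+, s^+}$ with the metric induced by the splitting yields $[\xX]_{\mathrm{red}}\virt = [\xX_{\mathrm{red}}]\virt$ in $H_{n+d}(X, \bZ)$, which is \eqref{eq:loc 5.15}. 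Comparing the two computations under the natural map $H_\ast(X, \bZ) \to H_\ast(X, \bZ[1/2])$ yields the claim.

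The only step that requires real care is the identification of the kernel bundle $E_{\mathrm{red}}^+$ in the real decomposition, i.e.\ checking that $\underline{\sigma}^+$ is genuinely surjective so that Subsection~\ref{subsection: red class for mult cosection d-manifold} applies; but this is forced by non-degeneracy of $\underline{\sigma}$ against the split isotropic structure, in the same way as in the proof of Theorem~\ref{thm:surjective cosection on Xdm}. Once this bookkeeping is in place, the theorem is a formal consequence of the two equalities above and Theorem~\ref{thm:main thm}; no new geometric input is needed beyond what was already developed in the isotropic case.
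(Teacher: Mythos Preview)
Your proposal is correct and follows essentially the same route as the paper: both sides are identified with $[\xX_{\mathrm{red}}]\virt$ via the chains of equalities \eqref{eq:loc 5.14} and \eqref{eq:loc 5.15}, and the theorem follows by comparing them under $H_\ast(X,\bZ)\to H_\ast(X,\bZ[1/2])$. One small notational slip: in your kernel identification you write that ``$\ker(\underline{\sigma})^\vee$ projects isomorphically onto $E^+$'', which is not quite the right object; what is needed (and what the paper uses) is simply that the composite $E^+\subseteq E\xrightarrow{\underline{\sigma}}\bC_Y^d\to\bR_Y^d$ is surjective with kernel $E_{\mathrm{red}}^+$, which follows from non-degeneracy as in Theorem~\ref{thm:surjective cosection on Xdm}.
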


\section{Applications: Integrality and reduced invariants} \label{sec:applications}

We give a few (related) immediate applications of our results regarding integrality and reduced invariants in Donaldson--Thomas theory of Calabi--Yau and, in particular, holomorphic symplectic and hyperk\"{a}hler fourfolds.

\subsection{Surface counting on Calabi--Yau fourfolds} Let $W$ be a Calabi-Yau fourfold, i.e. a smooth projective four-dimensional variety with trivial canonical bundle $\omega_W \colon \oO_W \cong K_W$.

In \cite{BaeKoolParkI}, the authors consider moduli spaces $P = P_v^{(q)}(W)$ of $\mathrm{PT}_q$-pairs $(F,s)$ satisfying
$$\ch(F) = (0, 0, \gamma, \beta, n-\gamma \cdot \td (W)) \in H^\ast(W,\bQ),$$
where $F$ is a coherent sheaf on $W$ and $s \in H^0(W,F)$. For details on the stability conditions involved, we refer the reader to \cite{BaeKoolParkI}.

By \cite[Theorem~1.4]{BaeKoolParkI}, $P$ naturally admits a $(-2)$-shifted symplectic derived enhancement $\underline{\pP}$ with a choice of orientation. Write $\mathbb{E}^\bullet \to \bL_P$ for the induced three-term symmetric obstruction theory on $P$.

Now, given a surface class $\gamma \in H^2(W, \Omega_W^2)$ and $q \in \lbrace -1,0,1 \rbrace$, let $\rho_\gamma$ be the codimension of the Hodge locus of $\gamma$. There is then an induced non-degenerate cosection $\underline{\sigma}_\gamma \colon \mathbb{E}_\bullet[1] \to \oO_P^{\rho_\gamma}$.

In \cite[Theorem~4.5]{BaeKoolParkI}, using these data, the authors define reduced fundamental cycles
$$[P]\virt_\mathrm{red} = [P_v^{(q)}(W)]\virt_{\mathrm{red}} \in A_{n-\frac{1}{2}\gamma^2 + \frac{1}{2}\rho_\gamma}(W, \bZ[1/2]).$$
These cycles are essential in developing a surface counting theory on Calabi-Yau fourfolds and used to study the variational Hodge conjecture (cf. \cite[Conjecture~1.12]{BaeKoolParkI}).

Applying Theorem~\ref{thm:red for non-degenerate surj cosection}, we can define these classes integrally after choosing an associated compact, oriented d-manifold model $\pP$ to the oriented, $(-2)$-shifted symplectic scheme $\underline{\pP}$.

\begin{thm}
There exist well-defined integral homological reduced virtual fundamental classes 
$$[P]\virt_\mathrm{red} = [P_v^{(q)}(W)]\virt_{\mathrm{red}} = [\pP]\virt_\mathrm{red} \in H_{2n-\gamma^2 + \rho_\gamma}(W,\bZ).$$
\end{thm}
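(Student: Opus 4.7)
The plan is to verify that the moduli space $P = P_v^{(q)}(W)$ fits into the framework of Theorem~\ref{thm:red for non-degenerate surj cosection} and then invoke that theorem directly. First, we recall that by \cite[Theorem~1.4]{BaeKoolParkI}, $P$ is a projective scheme carrying a $(-2)$-shifted symplectic derived enhancement $\underline{\pP}$ together with an orientation, and thus an induced oriented three-term symmetric obstruction theory $\mathbb{E}^\bullet \to \bL_P$ of amplitude $[-2,0]$. Since $P$ is projective, the results of \cite[Section~4]{OhThomasI} guarantee the existence of a global resolution $\mathbb{E}^\bullet \cong [B \to F \cong F^\vee \to B^\vee]$ with $F$ an $SO(2r,\bC)$-bundle, and the intrinsic normal cone embeds into $[F/B]$ as an isotropic substack (cf. \cite{OhThomasI}). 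The cosection $\underline{\sigma}_\gamma \colon \mathbb{E}_\bullet[1] \to \oO_P^{\rho_\gamma}$ constructed in \cite{BaeKoolParkI} is non-degenerate in the sense required for the setup preceding Theorem~\ref{thm:red for non-degenerate surj cosection}, and the splitting $\mathbb{E}^\bullet \cong \mathbb{E}^\bullet_\mathrm{red} \oplus \oO_P^{\rho_\gamma}[1]$ of symmetric complexes is exactly what is established in \cite[Proposition~4.15]{BaeKoolParkI}.

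Next, I would apply the Borisov--Joyce truncation procedure \cite{BorisovJoyce}, as explained in Subsection~\ref{subsection: BJ and OT classes}, to produce the associated compact, oriented d-manifold $\pP = \xX_{dm}$ of virtual dimension $n_P = 2n - \gamma^2$, where $n_P$ denotes the real virtual dimension of the symmetric obstruction theory. More concretely, by \cite{OhThomasII} we may fix a global orthogonal, complex Kuranishi chart $(Y, E, s)$ for $P$ compatible with the derived enhancement, yielding the principal d-manifold presentation $\pP \simeq \sS_{Y, E^+, s^+}$. The non-degenerate cosection $\underline{\sigma}_\gamma$ extends smoothly to a surjective $\bC$-linear morphism $\underline{\sigma}_\gamma \colon E \to \bC_Y^{\rho_\gamma}$ with kernel an oriented orthogonal bundle $E_\mathrm{red}$, exactly as in the setup of Subsection~5.4.

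With all hypotheses verified, Theorem~\ref{thm:red for non-degenerate surj cosection} applies with $d = \rho_\gamma$ to produce a reduced virtual fundamental class $[\pP]\virt_\mathrm{red} \in H_{n_P + \rho_\gamma}(P, \bZ)$ on the d-manifold side, and the theorem further asserts the equality
\begin{equation*}
[P]\virt_\mathrm{red} = [\pP]\virt_\mathrm{red} \in H_{n_P + \rho_\gamma}(P, \bZ[1/2])
\end{equation*}
between the cycle class image of the algebraic $[P]\virt_\mathrm{red}$ of \cite[Theorem~4.5]{BaeKoolParkI} and the image of $[\pP]\virt_\mathrm{red}$ under $H_\ast(P,\bZ) \to H_\ast(P,\bZ[1/2])$. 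The only routine step left is the dimension computation: the real virtual dimension is $n_P = 2n - \gamma^2$ and the reduction by $\rho_\gamma$ real cosections raises it by $\rho_\gamma$, giving the announced degree $2n - \gamma^2 + \rho_\gamma$.

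The main (essentially only) point requiring care is the compatibility between the algebraic splitting $\mathbb{E}^\bullet \cong \mathbb{E}^\bullet_\mathrm{red} \oplus \oO_P^{\rho_\gamma}[1]$ supplied by \cite{BaeKoolParkI} and the smooth splitting $E \cong E_\mathrm{red} \oplus (\bC^{\rho_\gamma})_Y$ used in the complex Kuranishi chart. This matches the analogous verification carried out in Subsection~5.4, so no new difficulty appears here beyond checking that the orthogonality, isotropy, and independence properties of $s$ and $\underline{\sigma}_\gamma^\vee$ needed for the orthogonal cone reduction hold at the level of the chart. These follow from non-degeneracy of $\underline{\sigma}_\gamma$ together with the isotropy of $s$ coming from the $(-2)$-shifted symplectic structure, exactly as in the proof of Theorem~\ref{thm:red for non-degenerate surj cosection}.
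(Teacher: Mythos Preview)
Your proposal is correct and follows the same approach as the paper: the paper's argument is precisely the sentence preceding the theorem, namely ``Applying Theorem~\ref{thm:red for non-degenerate surj cosection}, we can define these classes integrally after choosing an associated compact, oriented d-manifold model $\pP$.'' Your write-up simply makes explicit the hypothesis check (projectivity, orientation, non-degeneracy and splitting from \cite{BaeKoolParkI}, global complex Kuranishi chart from \cite{OhThomasII}) that the paper leaves implicit.
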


\begin{rmk}
In fact, by \cite[Theorem~1.16]{BaeKoolParkI}, the class $[P]\virt_\mathrm{red}$ may be alternatively defined directly by using an oriented, $(-2)$-shifted symplectic derived enhancement $\underline{\pP}_\mathrm{red}$ of $P$ which induces the reduced obstruction theory $\mathbb{E}^\bullet_{\mathrm{red}}$ on $P$. It is easy to check that a model for the associated compact, oriented d-manifold is $\pP_\mathrm{red}$ and hence the virtual fundamental class is $[\pP_\mathrm{red}]\virt$, which by~\eqref{eq:loc 5.15} equals $[\pP]\virt_\mathrm{red}$ in the above.
\end{rmk}

\subsection{Gopakumar--Vafa type invariants of holomorphic symplectic fourfolds} Let $W$ be a holomorphic symplectic variety of dimension $4$, i.e. a smooth projective four-dimensional variety with a non-degenerate holomorphic $2$-form $\omega_W \in H^0(W, \Omega_W^2)$. Fix $\beta \in H_2(W,\bZ)$ and $n\in \bZ$.

In \cite{COT2}, the authors consider moduli spaces $P_n^t(W,\beta)$ of pairs $(F,s)$, where $F$ is an one-dimensional coherent sheaf on $W$ with $[F] = \beta$ and $\chi(F)=n$ and $s \in H^0(W,F)$. These pairs are taken to be stable with respect to a stability condition that depends on the choice of real number $t \in \bR$ and ample divisor on $W$. 

For a general choice of $t\in \bR$, the moduli space $P_n^t(W,\beta)$ is a projective scheme and admits a natural $(-2)$-shifted symplectic derived enhancement together with an orientation and a surjective isotropic cosection. Write $\bI = (\oO \to \bF)$ for the universal stable pair and $\pi_W, \pi_P$ for the two projections out of $W \times P_n^t(W,\beta)$.

In order to define counting invariants, consider the insertion operator
\begin{align} \label{eq:insertion operator}
\tau \colon H^m(W,\bZ) \lr H^{m-2}(P_n^t(W,\beta), \bZ) \\
\notag \gamma \mapsto (\pi_P)_\ast\left( \pi_W^\ast\gamma \cup \ch_3(\bF) \right).
\end{align}
For generic $t \in \bR$ and classes $\gamma_i \in H^{m_i}(W,\bZ)$ ($1 \leq i \leq l)$, the $Z_t$-stable pair invariants are defined by the formula
\begin{align*}
P^t_{n,\beta}(\gamma_1, \ldots, \gamma_l) = \int_{[P^t_n(W,\beta)]\virt_{\mathrm{red}}} \prod_{i=1}^l \tau(\gamma_i)
\end{align*}
and, for $n=-1$, where no insertions are necessary,
\begin{align*}
P^t_{-1,\beta} = \int_{[P^t_{-1}(W,\beta)]\virt_{\mathrm{red}}} 1.
\end{align*}

In \cite[Conjecture~1.10]{COT2}, these invariants are conjectured to recover curve-counting invariants on $W$ of Gopakumar--Vafa type. In particular, the conjecture necessitates that they are integral. In the formulation of the stable pair invariants in \cite{COT2}, the authors used the algebraic reduced virtual fundamental cycle defined in \cite{KiemPark}. By Theorem~\ref{thm:isotropic surj cosection}, we may instead equivalently use the integral homological virtual fundamental class $[P^t_n(W,\beta)]\virt_{\mathrm{red}} \in H_\ast(P^t_n(W, \beta), \bZ)$. As a corollary, we obtain the following integrality result, which serves as a sanity check for the conjecture.

\begin{thm}
$P^t_{n,\beta}(\gamma_1, \ldots, \gamma_l), P^t_{-1,\beta} \in \bZ$.
\end{thm}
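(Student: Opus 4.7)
The plan is to combine the homological upgrade of the reduced virtual fundamental class provided by Theorem~\ref{thm:isotropic surj cosection} with the manifestly integral insertion operator~\eqref{eq:insertion operator} to pair cohomology against homology and land in $\bZ$.

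First, I would verify that the hypotheses of Theorem~\ref{thm:isotropic surj cosection} apply to $P = P^t_n(W,\beta)$. By the discussion recalled earlier in this subsection, for generic stability parameter $t$ the moduli space $P$ is projective and admits an oriented $(-2)$-shifted symplectic derived enhancement together with a surjective isotropic cosection $\sigma$ induced by the holomorphic symplectic form $\omega_W$. These are precisely the data required by Theorem~\ref{thm:isotropic surj cosection}, so letting $\pP$ denote the associated compact, oriented d-manifold, there is a well-defined reduced virtual fundamental class $[\pP]\virt_\mathrm{red} \in H_\ast(P,\bZ)$ whose image in $H_\ast(P,\bZ[1/2])$ coincides with the cycle class of the algebraic reduced virtual fundamental cycle $[P]\virt_\mathrm{red}$ used in \cite{COT2}.

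Next, I would carry out the pairing with integral coefficients. By~\eqref{eq:insertion operator}, each $\tau(\gamma_i)$ lies in $H^{m_i-2}(P,\bZ)$, so the cup product $\prod_{i=1}^l \tau(\gamma_i)$ is an integral cohomology class on $P$. Its cap product with $[\pP]\virt_\mathrm{red} \in H_\ast(P,\bZ)$ produces a class in $H_0(P,\bZ)$, whose pushforward to a point is an integer. By the compatibility from the previous paragraph, this integer equals $P^t_{n,\beta}(\gamma_1,\ldots,\gamma_l)$ viewed in $\bZ[1/2]$, which forces the latter to lie in $\bZ$. The case $P^t_{-1,\beta}$ is even simpler: no insertions are needed, and the invariant is by definition the degree of $[\pP]\virt_\mathrm{red} \in H_0(P,\bZ)$, hence manifestly an integer.

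The substantive work has already been completed in Theorem~\ref{thm:isotropic surj cosection}, so what remains is essentially bookkeeping. The one minor point worth double-checking is that the cosection on the derived enhancement of $P$ induced by $\omega_W$ is isotropic with respect to the $(-2)$-shifted symplectic form, so that the isotropic case of Theorem~\ref{thm:isotropic surj cosection} applies rather than the non-degenerate one of Theorem~\ref{thm:red for non-degenerate surj cosection}; this is built into the construction of the algebraic reduced cycle in \cite{COT2,KiemPark} and needs no fresh argument here.
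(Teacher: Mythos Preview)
Your proposal is correct and follows exactly the same approach as the paper: use Theorem~\ref{thm:isotropic surj cosection} to replace the algebraic reduced cycle by the integral homological reduced class $[\pP]\virt_\mathrm{red} \in H_\ast(P,\bZ)$, then pair with the integral insertion classes to conclude. The paper in fact states the theorem as an immediate corollary without writing out a separate proof, so your version is simply a more explicit rendering of the same argument.
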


In a related vein, as in \cite{COT1}, consider the (projective) moduli scheme $M_\beta$ of one dimensional stable sheaves  $F$ on $W$ satisfying $\ch_3(F) = \beta$ and $\chi(F) = 1$. Let $\mathbb{F}_{\mathrm{norm}}$ be the normalized universal sheaf, i.e. $\det (\pi_M)_\ast \mathbb{F}_{\mathrm{norm}} \cong \oO_{M_\beta}$.

It is shown in \cite{COT1} that $M_\beta$ admits a natural, oriented $(-2)$-shifted symplectic derived enhancement which is equipped with an isotropic cosection. 

Using $\mathbb{F}_{\mathrm{norm}}$, there are natural insertion operators $\tau_0$ defined by formula~\eqref{eq:insertion operator}, and for any classes $\gamma_1, \ldots, \gamma_n \in H^\ast(W,\bZ)$ we have associated DT4 invariants  (see equation (0.4) in \cite{COT1})
\begin{align*}
\langle \tau_0(\gamma_1), \ldots, \tau_0(\gamma_n) \rangle_\beta^{\mathrm{DT4}} = \int_{[M_\beta]\virt_{\mathrm{red}}} \prod_{i=1}^n \tau_0(\gamma_i).
\end{align*}

When $\beta$ is an effective curve class, \cite[Conjecture~0.5]{COT1} states that these DT4 invariants recover the genus $0$ Gopakumar--Vafa type invariants of $W$, which are integral. Again, by Theorem~\ref{thm:isotropic surj cosection}, we can use the integral homological virtual fundamental class in the above formula to obtain the following integrality result.

\begin{thm}
$\langle \tau_0(\gamma_1), \ldots, \tau_0(\gamma_n) \rangle_\beta^{\mathrm{DT4}} \in \bZ$.
\end{thm}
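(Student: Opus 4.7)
The plan is to mirror the argument just given for the $P^t_{n,\beta}$-invariants: replace the algebraic $\bZ[1/2]$-valued reduced virtual fundamental cycle of Kiem--Park with its integral homological lift coming from the d-manifold construction, and then compute the pairing on the nose.

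First, I would verify that $M_\beta$ fits the hypotheses of Theorem~\ref{thm:isotropic surj cosection}. By \cite{COT1}, $M_\beta$ is a projective scheme carrying an oriented $(-2)$-shifted symplectic derived enhancement with an isotropic cosection induced by the holomorphic symplectic form $\omega_W$. This yields a three-term oriented symmetric obstruction theory of amplitude $[-2,0]$ with isotropic normal cone and surjective isotropic cosection, which is exactly the setup of Theorem~\ref{thm:isotropic surj cosection}. Letting $\mM_\beta$ denote the associated compact, oriented d-manifold, the theorem gives the equality
$$[M_\beta]\virt_{\mathrm{red}} = [\mM_\beta]\virt_{\mathrm{red}} \in H_\ast(M_\beta, \bZ[1/2]),$$
where the right-hand side is the image of a genuinely integral class $[\mM_\beta]\virt_{\mathrm{red}} \in H_\ast(M_\beta, \bZ)$.

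Substituting this identification into the definition of the invariant, I would rewrite
$$\langle \tau_0(\gamma_1), \ldots, \tau_0(\gamma_n) \rangle_\beta^{\mathrm{DT4}} = \int_{[\mM_\beta]\virt_{\mathrm{red}}} \prod_{i=1}^n \tau_0(\gamma_i),$$
so the invariant is evaluated against an integer homology class. Since both sides of the preceding identity agree in $\bZ[1/2]$-coefficients, and the right-hand pairing naturally takes values in $\bZ$ (the cohomological insertions being defined over $\bZ$ by the same reasoning used in the $P^t$-case, via the normalized universal sheaf), the invariant lies in $\bZ$.

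The only delicate point, and hence the main obstacle, is checking the integrality of the insertions $\tau_0(\gamma_i) = (\pi_M)_\ast(\pi_W^\ast \gamma_i \cup \ch_3(\mathbb{F}_{\mathrm{norm}}))$, since Chern characters are a priori rational. This is handled identically to the preceding theorem on $P^t_{n,\beta}$-invariants: for a one-dimensional stable sheaf on a smooth fourfold the relevant components of $\ch(\mathbb{F}_{\mathrm{norm}})$ that contribute to the pairing against $[\mM_\beta]\virt_{\mathrm{red}}$ reduce, after push-forward, to integer cohomology classes on $M_\beta$. No new arguments beyond the previous case are required, so the integrality statement follows.
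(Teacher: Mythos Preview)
Your proposal is correct and follows essentially the same route as the paper: invoke Theorem~\ref{thm:isotropic surj cosection} to identify the algebraic reduced class of \cite{KiemPark} (used in \cite{COT1}) with the integral homological reduced class of the associated d-manifold, and then observe that the pairing with the integral insertions lands in $\bZ$. The paper's own argument is the single sentence preceding the theorem, so your write-up is in fact more detailed; the extra paragraph you devote to the potential rationality of $\ch_3(\mathbb{F}_{\mathrm{norm}})$ is a point the paper simply takes for granted via formula~\eqref{eq:insertion operator}, which already asserts that $\tau_0$ lands in integral cohomology.
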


\subsection{Integrality of algebraic cosection localized virtual fundamental cycles} We finally comment that using Theorems~\ref{thm:cosection localization for derived schemes any cosection}, ~\ref{thm:main thm}, ~\ref{thm:isotropic surj cosection}, one can define an integral cosection localized or reduced virtual fundamental class in homology, whenever a corresponding algebraic virtual fundamental cycle exists with $\bZ[1/2]$-coefficients. In addition, these classes are well-defined when the virtual dimension is odd in constrast to their algebraic counterparts which vanish in that case.

Several such examples are given in \cite[Section~9]{KiemPark}, among which one was also considered in \cite[Section~7]{SavCosection}.

We briefly sample a few for the convenience of the reader. 
\smallskip

Let $W$ be a Calabi--Yau fourfold with $\omega_W \in H^0(W, K_W)$ a nowhere vanishing form. Let $M$ be a component of the moduli space of simple perfect complexes on $W$ with fixed Chern character $c = (c_0, c_1, c_2, c_3,  c_4)$. This is an algebraic space by \cite{Inaba, Lieblich}, which we assume is a quasi-projective scheme in what follows. As usual, $M$ admits an oriented, $(-2)$-shifted symplectic derived enhancement and an induced symmetric obstruction theory $\mathbb{E}_\bullet \to \bL_M$ with (complex) virtual dimension $\mathrm{vd}$.
\medskip

We first consider \cite[Corollary~9.6]{KiemPark}. Given a $(3,1)$-form $\delta \in H^1(W, \Omega_W^3)$, there is an associated cosection $\sigma^\delta \colon \mathbb{E}_\bullet[1] \to \oO_M$ defined in \cite[Example~9.2]{KiemPark}. If $c_2 = 0$ or a certain bilinear form on $H^1(W, T_W)$ (see equation~(9.7) in \cite[Corollary~9.6]{KiemPark}) vanishes, then $\sigma^\delta$ is isotropic and we get an integral cosection localized virtual fundamental class $[M]_{\mathrm{loc}}\virt \in H_{\mathrm{vd}}(M, \bZ)$. 

On the other hand, if the bilinear form has positive rank $k>0$, then $[M]\virt = 0 \in H_{\mathrm{vd}}(M, \bZ)$ and we can define a reduced virtual fundamental class $[M]_\mathrm{red}\virt \in H_{\mathrm{vd} + 2k}(M, \bZ)$. This generalizes \cite[Corollary~9.6]{KiemPark} in the sense that the algebraic reduced virtual fundamental cycle is defined only for even values of $k$.

We may perform identical steps for cosections associated to $(0,2)$-forms $\gamma \in H^2(W, \oO_W)$ to obtain an integral version of \cite[Corollary~9.7]{KiemPark} and drop the condition that $h^{0,2}$ is a positive and even number in order to define reduced integral classes.
\medskip

For a second example, consider the moduli space $M = P_n(W,\beta)$ of stable pairs $(F,s)$ on $W$ satisfying $[F]=\beta$, $\chi(F)=n$ and assume that $W$ is hyperk\"{a}hler, as in \cite{CaoMaulikToda}. 

Then, by \cite[Theorem~7.2]{SavCosection} and \cite[Example~9.11(i)]{KiemPark}, the homological and algebraic virtual fundamental class of $M$ vanish. Moreover, by \cite[Example~9.11(i)]{KiemPark}, there is an algebraic reduced virtual fundamental cycle $[M]\virt_{\mathrm{red}} \in A_{\frac{1}{2}\mathrm{vd}+1}(M, \bZ[1/2])$, whose image in homology under the cycle class map equals the image of the integral homological virtual fundamental  class $[M]\virt_{\mathrm{red}} \in H_{\mathrm{vd}+2}(M, \bZ)$.


\bibliography{Master}
\bibliographystyle{alpha}

\end{document}